\newtheorem{assumption}{Assumption}
\newtheorem{claim}{Claim}
\newcommand{\gk}{\nabla ^\top g(f(w_k))}
\newcommand{\argmin}[1]{\underset{#1}{\arg\min}}
\newcommand{\grad}{\rr{grad}}
\newcommand{\Exp}{\rr{Exp}}
\newcommand{\bE}{\mathbb{E}}
\newcommand{\bN}{\mathbb{N}}
\newcommand{\bR}{\mathbb{R}}
\newcommand{\bS}{\mathbb{S}}
\newcommand{\cB}{\mathcal{B}}
\newcommand{\cC}{\mathcal{C}}
\newcommand{\cD}{\mathcal{D}}
\newcommand{\cM}{\mathcal{M}}
\newcommand{\cN}{\mathcal{N}}
\newcommand{\cO}{\mathcal{O}}
\newcommand{\cR}{\mathcal{R}}
\newcommand{\cS}{\mathcal{S}}
\newcommand{\cT}{\mathcal{T}}
\newcommand{\mdot}{\mathord{\cdot}}
\newcommand{\dt}[1]{\frac{\mathrm{d}{#1}}{\mathrm{d}t}}
\newcommand{\rr}[1]{\mathrm{#1}}
\newcommand{\Ht}{\bm{H}_\rho(\alpha f (w_t))}
\newcommand{\rank}{\mathrm{rank}}
\begin{document}

\title{A Riemannian Optimization Perspective of the Gauss-Newton Method for Feedforward Neural Networks}

\author{\name Semih Cayci \email {cayci@mathc.rwth-aachen.de} \\
       \addr Department of Mathematics\\
       RWTH Aachen University\\
       Aachen 52062, Germany
       }

\editor{}

\maketitle

\begin{abstract}
In this work, we establish non-asymptotic convergence bounds for the Gauss-Newton method in training neural networks with smooth activations. In the underparameterized regime, the Gauss-Newton gradient flow in parameter space induces a Riemannian gradient flow on a low-dimensional embedded submanifold of the function space. Using tools from Riemannian optimization, we establish geodesic Polyak-\L{}ojasiewicz and Lipschitz-smoothness conditions for the loss under appropriately chosen output scaling, yielding geometric convergence to the optimal in-class predictor at an explicit rate independent of the conditioning of the Gram matrix. In the overparameterized regime, we propose adaptive, curvature-aware regularization schedules that ensure fast geometric convergence to a global optimum at a rate independent of the minimum eigenvalue of the neural tangent kernel and, locally, of the modulus of strong convexity of the loss. These results demonstrate that Gauss–Newton achieves accelerated convergence rates in settings where first-order methods exhibit slow convergence due to ill-conditioned kernel matrices and loss landscapes.
\end{abstract}

\begin{keywords}
  Gauss-Newton, Riemannian optimization, Levenberg-Marquardt, deep learning
\end{keywords}

\section{Introduction}
First-order optimization methods typically suffer from slow convergence in deep learning due to factors including ill-conditioned data geometry and loss landscapes \citep{shalev2017failures}. In order to address these drawbacks, geometry-aware preconditioned optimization methods have attracted significant attention in deep learning as a way to accelerate and stabilize training. Particularly, the Gauss-Newton (GN) method has been a focal point of practical and theoretical interest due to its strong empirical performance in large-scale deep learning problems \citep{abreu2025potential, liu2025adam, martens2020new, ren2019efficient, botev2017practical}. Remarkably, recent empirical studies report up to 5.4x reduction in training iterations under GN on large language models compared to optimizers such as AdamW, SOAP and Muon \citep{abreu2025potential}. Despite its impressive empirical success, a concrete theoretical understanding of the Gauss-Newton method in deep learning is still in a nascent stage. Particularly, the convergence and optimality of this method and the benefits of preconditioning in the over- and underparameterized learning settings remain largely unexplored.

\subsection{Main Contributions}
In this work, we investigate the convergence behavior of the Gauss-Newton method and the impact of Gauss-Newton preconditioning in deep learning from a novel Riemannian optimization perspective. Our results highlight that Gauss-Newton preconditioning effectively mitigates slow convergence of the first-order methods due to ill-conditioned kernel matrices and loss landscapes under appropriate scaling and damping choices.

\textbullet\quad\textbf{Convergence of Gauss–Newton in the overparameterized regime.} In this regime, we prove the fast global convergence of the Gauss–Newton method with regularization. The main results in this section are Theorems \ref{thm:lm-convergence} and \ref{thm:non-quad-out}, and Corollary \ref{cor:lm-convergence-ada}. In Section \ref{sec:op-deep}, we present the results for deep neural networks.

     \begin{itemize}
         \item[--] \textit{Adaptive regularization schedules for Gauss-Newton.} We propose data-dependent and curvature-aware adaptive regularization schedules, which hybridize Gauss–Newton and gradient descent to mitigate ill-conditioned data distributions and loss landscapes. Under these schedules, we establish fast convergence to global optima (Theorems \ref{thm:lm-convergence}, \ref{thm:non-quad-out} and Corollary \ref{cor:lm-convergence-ada} for continuous-time, and Theorem \ref{thm:lm-convergence-dt} for discrete-time analysis). We characterize the implicit bias of the regularized Gauss-Newton method in the neural tangent kernel regime in Proposition \ref{prop:implicit-bias} for the quadratic loss function.

         Our analysis reveals that the Gauss-Newton method with our proposed regularization schedules achieves rapid geometric convergence rates that are notably independent of the smallest eigenvalue of the kernel matrix. This results in a substantial improvement in convergence rates, particularly when dealing with large datasets with small data separation. Furthermore, we show that switching to a curvature- and data-dependent regularization schedule upon entering a neighborhood of the optimum that we explicitly characterize, \textsc{GN} achieves fast convergence independent of the modulus of strong convexity of the loss function.

     \end{itemize}

 \textbullet\quad\textbf{Convergence of Gauss–Newton in the underparameterized regime.} We leverage the rich theory of Riemannian optimization for a fine-grained geometric convergence and optimality analysis of (unregularized) GN in this regime. To the best of our knowledge, our work establishes the first non-asymptotic performance guarantees for Gauss-Newton in this regime. The main results in this setting are stated in Theorem \ref{thm:geodesic-regularity} and Theorem \ref{thm:gf-convergence}.
    \begin{itemize}
        \item[--] \textit{Riemannian gradient flow in the function space.}  We show that the Gauss–Newton gradient flow induces a \emph{Riemannian} gradient flow on a low-dimensional smooth embedded submanifold $\cM$ of the predictor space $\bR^n$ (Prop. \ref{prop:submanifold} and Prop. \ref{prop:riemannian-gf}).
        \item[--] \textit{Geodesic convexity and smoothness of the loss function.} We establish \emph{geodesic} strong convexity and Lipschitz smoothness of the loss function on a level set $\mathcal{S}\subset\mathcal{M}$ that contains the optimization trajectory (Theorem \ref{thm:geodesic-regularity}). Our variational analysis demonstrates how the output scaling explicitly controls the curvature, thereby yielding the key geodesic regularity properties required for fast geometric convergence.
        \item[--] \textit{Convergence and in-class optimality for Gauss–Newton.} Leveraging the geodesic regularity conditions, we prove that the metric is non-degenerate indefinitely, and thus the Gauss–Newton method yields convergence of the \textbf{last-iterate} to the optimal in-class predictor at a \textbf{geometric rate} independent of the conditioning of the Gram matrix \textbf{without} any explicit regularization (Theorem \ref{thm:gf-convergence}).
        \item[--] \textit{Inductive bias and curvature.} The initialization and the output scaling factor play a vital role in the convergence rate, curvature and the inductive bias. We explicitly characterize these impacts in Theorem \ref{thm:geodesic-regularity} and Remarks \ref{remark:curvature} and \ref{remark:alpha}).
    \end{itemize}

\subsection{Related Works}

\paragraph{Analysis of the Gauss-Newton method.} The Gauss-Newton method has a long history in numerical linear analysis \citep{saad2003iterative} and nonlinear least-squares \citep{nocedal1999numerical}. It has recently attracted renewed attention and become a focal point of practical and theoretical interest because of its success in deep learning \citep{korbit2024exact, tan2019review, botev2017practical} and scientific machine learning \citep{rathore2024challenges, hao2024gauss, muller2023achieving}. Despite its empirical success, its theoretical underpinnings in deep learning have been largely unknown. Its convergence has been investigated in a number of recent works \citep{cai2019gram, zhang2019fast, arbel2024rethinking, adeoye2024regularized, zhao2024theoretical, jia2024globally}, which consider the Gauss-Newton method in the overparameterized setting. In this work, we provide finite-time and finite-width analyses for both the underparameterized and overparameterized regimes, and identify provably good choices of regularization and output scaling parameters. In the overparameterized regime, we study various adaptive curvature-aware damping (regularization) schemes and theoretically prove the benefits of the regularized Gauss-Newton method in training neural networks in the lazy training regime. In the underparameterized setting, we develop a Riemannian optimization analysis for the Gauss-Newton method, which is fundamentally different from the existing works. 

\paragraph{Optimization in the lazy training regime.} The original works in the lazy training regime analyze the convergence of gradient descent for overparameterized neural networks \citep{du2018gradient, jacot2018neural, chizat2019lazy}. Our analysis builds on the analysis proposed in \cite{chizat2019lazy, du2018gradient}, and extends it to analyze the Gauss-Newton method for both over- and underparameterized neural networks. In the underparameterized regime, deviating significantly from the existing works, we integrate tools from the Riemannian optimization theory to analyze the Gauss-Newton dynamics in training neural networks. In a number of works \citep{ji2020polylogarithmic, cayci2024convergence, cai2019neural}, convergence of first-order methods in the underparameterized regime was investigated in the near-initialization regime. These results establish near-optimality results for first-order methods (i) under explicit regularization in the form of projection or early stopping, (ii) for the average- or best-iterate, and (iii) with convergence rates into a ball around the near-optimal parameter at a slow subexponential rate. The main analysis approach in these works mimics the projected subgradient descent analysis and necessitate realizability conditions. On the other hand, we prove that Gauss-Newton dynamics (i) achieves last-iterate \textit{convergence} to an in-class optimum predictor, (ii) without any realizability assumptions, and (iii) at a fast geometric convergence rate, emphasizing the benefits of Gauss-Newton preconditoning in the underparameterized regime.

\subsection{Notation}
For a differentiable curve $\gamma:I\subset \bR^+ \rightarrow \bR$, $\dot{\gamma}_t$ and $\gamma'(t)$ denote its derivative at time $t$. $\bm{I}$ denotes the identity matrix. $\succcurlyeq$ is the Loewner order. For a smooth function $f:\bR^n\rightarrow\bR^p$, $\rr{Lip}_f$ denotes its modulus of Lipschitz continuity. For a symmetric positive-definite matrix $\bm{A}\in\bR^{n\times n}$ and $v\in\bR^n$, $\|x\|_{\bm{A}}^2:=x^\top \bm{A}x$. We define $\|v\|_2^2:=v^\top v$ for $v\in\bR^n$, and $\|v\|=\|v\|_2$ unless specified otherwise. For $h:\bR\rightarrow \bR$, $\|h\|_\infty:=\sup_{z\in\bR}|h(z)|$. For a matrix $\bm{P}\in\bR^{n\times n}$, $\|\bm{P}\|$ denotes its operator norm and $\lambda_{\min}(\bm{P})$ denotes its minimum eigenvalue. We denote the unit sphere in $\bR^n$ as $\bS^{n-1}:=\{x\in\bR^n:\|x\|_2=1\}$.

\section{Problem Setting and the Gauss-Newton Dynamics}
\subsection{Supervised Learning Setting}\label{subsec:sl}
In this work, we consider a supervised learning problem with a data set $$\mathcal{D}=\{(x_j,y_j)\in\bR^d\times\bR:1\leq j \leq n\},$$ where $\{x_j\in\bR^d:j\in[n]\}$ are the training inputs and $\{y_j\in\bR:j\in[n]\}$ are the outputs. Given a loss function $\ell:\bR\times\bR\rightarrow\bR_+$, the empirical risk for the prediction $\xi=(\xi_1~\ldots~\xi_n)$ is defined as
    $$g(\xi):=\sum_{j=1}^n\ell(\xi_j,y_j).$$

\textbf{Deep fully-connected neural networks.} We consider a feedforward deep neural network of depth $H \geq 1$ and width $m$ with a smooth (i.e., infinitely-differentiable) activation function $\sigma:\bR\rightarrow\bR$ such that $$\sup_{z\in\bR}|\sigma(z)|\leq \sigma_0,~\sup_{z\in\bR}|\sigma'(z)|\leq \sigma_1,\mbox{ and }\sup_{z\in\bR}|\sigma''(z)|\leq \sigma_2.$$ Note that many widely-used activation functions, including $\tanh$ (with $\sigma_0=1,\sigma_1=2,\sigma_2=2$) and sigmoid function, satisfy this.

Let $W^{(1)}\in\bR^{m\times d}$ and $W^{(h)}\in\bR^{m\times m}$ for $h=2,3,\ldots,H$, and $\bm{W}:=(W^{(1)},\ldots,W^{(H)})$. Then, given a training input $x_j\in\bR^d$, the neural network is defined recursively as
\begin{align}
\begin{aligned}
    \bm{x}_j^{(h)}(\bm{W})&=\sqrt{\frac{a_\sigma}{m}}\cdot\vec{\sigma}\Big(W^{(h)}\bm{x}^{(h-1)}_j(\bm{W})\Big),\quad h=1,2,\ldots,H,\\
    \varphi(x_j;w)&=c^\top\bm{x}_j^{(H)}(\bm{W})
    \end{aligned}
\end{align}
where $\bm{x}_j^{(0)}(\bm{W})=x_j$, $a_\sigma:=(\bE_{z\sim\cN(0,1)}[\sigma^2(z)])^{-1}$ is normalization parameter, $w={\rr{vec}}(\bm{W},c)$ is the parameter vector, and $\vec{\sigma}(z)=[\sigma(z_1)~\ldots~\sigma(z_m)]^\top$.

\textbf{Random initialization.} We adopt the standard NTK initialization \(w_0=(c_0,\bm W_0)\) as in \cite{ji2020polylogarithmic,du2018gradient}: for each layer $h\in[H]$,
\begin{equation}
    [c_{0}]_i\overset{\rr{iid}}{\sim}\rr{Rad}\mbox{ and }[W_{0}^{(h)}]_{ij}\overset{\rr{iid}}{\sim}\cN(0,1).
\end{equation}
We denote the prediction function as $$f(w):=[
    \varphi(x_1;w)
    ~\ldots
    ~\varphi(x_n;w)]-b,$$ where $b\in\bR^n$ is a fixed bias term. To ensure $f(w_0)=0$ following \cite{chizat2019lazy,mjt2021deep, bai2019beyond}, we set $b_j=\varphi(x_j;w_0)$ for each $j\in[n]$, which simply re-centers the model in function space. Since $b$ is constant in the learnable parameter $w$, gradients and Hessians and thus gradient-based dynamics remain unchanged.

\textbf{Optimization in supervised learning.} The objective in this paper is to empirical risk minimization:
\begin{equation}
    \min_{w\in\bR^p}~g(\alpha f(w))=:\cR(w),
\end{equation}
where $\alpha > 0$ is an output scaling parameter, which will be critical in the convergence results in this paper. We assume that $g:\bR^n\rightarrow\bR_+$ is (Euclidean) $\nu$-strongly convex and has $\mu$-Lipschitz continuous gradients as a function of the prediction $\xi\in\bR^n$:
\begin{equation}
    \nu \bm{I}\preccurlyeq \nabla^2g(\xi)\preccurlyeq\mu\bm{I},\quad\xi\in\bR^n.
\end{equation}
In the case of quadratic loss $g(\xi)=\frac{1}{2}\sum_{j=1}^n(\xi_j-y_j)^2$, we have $\mu=\nu=1$. Note that $w\mapsto g(\alpha f(w))=:\cR(w)$ is highly nonconvex, which constitutes the main challenge in the empirical risk minimization problem. The number of learnable parameters is $p$, i.e., $w\in\bR^p$. We call the model \emph{underparameterized} if $p\leq n$, and overparameterized otherwise. For each layer $h \in [H]$, we denote the Jacobians of $f(w)$ with respect to $W^{(h)}$ and $w$, respectively, as 
\begin{equation}
    \rr{D}_h f(w):=\begin{bmatrix}
        \nabla^\top_{\rr{vec}(W^{(h)})}\varphi(x_1;w)\\
        \vdots\\
        \nabla^\top_{\rr{vec}(W^{(h)})}\varphi(x_n;w)\\
    \end{bmatrix}
    \mbox{ and }\rr{D}f(w):=\begin{bmatrix}
    \nabla_w^\top \varphi(x_1;w)\\
    \vdots\\
    \nabla_w^\top \varphi(x_n;w)
    \end{bmatrix}.
\end{equation}

\subsection{Gauss-Newton Gradient Flow}
In this work, we consider Gauss-Newton gradient flow for training neural networks:
\begin{align}
    \begin{cases}
        \begin{aligned}
    \dt{w_t} &= -\frac{1}{\alpha}\big[\bm{H}_\rho(\alpha f(w_t))\big]^{-1}\rr{D}^\top f(w_t)\nabla g(\alpha f(w_t))~~~\mbox{for}~t > 0,\\
    w_0 &= w_\mathrm{init},    
    \end{aligned}
    \end{cases}
    \label{eqn:gauss-newton}
\end{align}
where $\alpha > 0$ is a scaling factor, and 
\begin{equation}
\bm{H}_\rho(w) := (1-\rho(w))\rr{D}^\top f(w)\nabla_f^2g(\alpha f(w))\rr{D}f(w)+\rho(w) \bm{I}
\label{eqn:preconditioner}
\end{equation}
is the preconditioner with the regularization (or damping) factor $\rho:\bR^p\rightarrow[0,1].$
In case $\rr{D}^\top f(w_t)\nabla_f^2g(\alpha f(w_t))\rr{D} f(w_t)$ is singular, which is the case in overparameterized problems with $n > p$, regularization is used to ensure that $\bm{H}_\rho(w_t)$ is non-singular. The case $\rho(w) > 0$ is known as the Levenberg-Marquardt dynamics \citep{nocedal1999numerical}.

\section{Gauss-Newton Dynamics for Overparameterized Neural Networks}\label{sec:op}
We start with the analysis in the overparameterized regime with $p > n$. Since we have $\rank\big(\rr{D}^\top f(w)\rr{D}f(w)\big) \leq n < p$ in this regime, we consider regularization (or damping) $\rho > 0$ to ensure that \eqref{eqn:gauss-newton} is well-defined, which leads to the Levenberg-Marquardt dynamics. As the analysis will indicate, the regularization schedule $\rho$ plays a fundamental role on the convergence of Gauss-Newton dynamics in the overparameterized regime, and we propose provably good regularization schemes to achieve fast convergence without any dependence on the spectrum of $\bm{K}_0$ and (locally) on $\nabla^2\cR(w)$. The proof in the overparameterized regime extends the kernel analysis in \cite{chizat2019lazy, du2018gradient} for the gradient flows to the Gauss-Newton gradient flows, and setting $\rho(w)=1$ in our theoretical results will recover the existing bounds. 

We first present the analysis of the case \(H=1\) with the output layer $c$ frozen at initialization, which conveys the main ideas with sharp bounds and minimal notation. Hence, $w=\rr{vec}(W^{(1)})$ and $p = md$ here. Corollary \ref{cor:deep} extends the arguments to deep networks with trainable output layers.

In the overparameterized regime, the spectral properties of the so-called neural tangent kernel has a crucial impact on the convergence. To that end, let $\bm{K},\bar{\bm{K}}\in\bR^{n\times n}$ be defined as 
\begin{align*}
[\bar{\bm{K}}]_{ij} &:= x_i^\top x_j\bE_{u_0\sim\cN(0,\bm{I}_d)}[\sigma'(u_0^\top x_i)\sigma'(u_0^\top x_j)]a_\sigma^2 ,\quad i,j\in\{1,2,\ldots,n\},\\
\bm{K}(w)&:=\rr{D}f(w)\rr{D}^\top f(w),\quad w\in\bR^p.
\end{align*} Note that under the initialization $(c,w_\rr{init})$, we have $\bE[\bm{K}(w_\rr{init})] = \bar{\bm{K}}.$ We make the following standard representational assumption on the so-called neural tangent kernel evaluated at $\mathcal{D}$ \citep{chizat2019lazy}.
\begin{assumption}
    Assume that $\bm{K}(w_0)$ is strictly positive definite with the minimum eigenvalue $4\lambda^2 > 0$.
    \label{assumption:ntk}
\end{assumption}
\begin{remark}[Conditioning of the neural tangent kernel matrix $\bm{K}_0$]\normalfont
    The geometry of the data points $\{x_i\in\bR^d:i=1,2,\ldots,n\}$ has a significant impact on the spectrum of $\bm{K}_0$, thus $\lambda^2$. If the data points are uniformly distributed on $\bS^{d-1}$ for $d \geq 2$ as $x_i\sim_\rr{iid}\rr{Unif}(\bS^{d-1})$, then we have (up to logarithmic factors) $$n^{-\frac{4}{d-1}}\lesssim \lambda^2 \lesssim n^{-\frac{2}{d-1}}$$ with high probability, while we have $\lambda^2 \lesssim \delta'(\cD):=\min_{i\neq j}\|x_i-x_j\|_2$ more generally \citep{karhadkar2024bounds}. As such, while $\lambda >0$ holds in general, $\bm{K}_0$ can be highly ill-conditioned for large training sets $\cD$, implying a very small $\lambda^2$. Since the convergence rate of the gradient flow is $\exp(-\nu\lambda^2 t)$ \citep{chizat2019lazy, du2018gradient}, small $\lambda^2\approx 0$ implies an arbitrarily slow convergence.
\label{remark:geometry}
\end{remark}

\noindent For a single-hidden layer neural network ($H=1$), the prediction function $w\mapsto \alpha f(w)$ has globally $L$-Lipschitz gradients (i.e., $g$ is $L$-smooth) with \begin{equation}\label{eqn:L}L=\frac{\sigma_2}{\sqrt{m}}\sqrt{\sum_{j=1}^n\|x_j\|_2^4}.\end{equation} Under Assumption \ref{assumption:ntk}, if \begin{equation}\label{eqn:r0-op}\|w-w_\rr{init}\|_2 < r_0:=\frac{\lambda}{L},\end{equation} then $\rr{D}f(w)\rr{D}^\top f(w) \succcurlyeq \lambda^2 \bm{I}$ \citep{mjt2021deep, chizat2019lazy}.

Under the Gauss-Newton gradient flow \eqref{eqn:gauss-newton}, define the exit time $$T:=\inf\{t > 0:\|w_t-w_0\|_2 \geq  r_0\}.$$ Also, let $\bm{K}_t :=\bm{K}(w_t),~t\in[0,\infty)$ be the kernel matrix, and $\lambda_t^2:=\lambda_{\min}(\bm{K}_t)$. Then, we have
\begin{align}
    \inf_{t\in[0,T)}\lambda_t^2 \geq \lambda^2.
    \label{eqn:ntk-bound}
\end{align}

We start with the analysis with the Gauss-Newton preconditioner derived the quadratic loss function, and we extend the results to non-quadratic losses in Section \ref{subsec:non-quad}.

\subsection{Gauss–Newton in the Overparameterized Regime}
We first consider the preconditioner $$\bm{H}_\rho(w_t)=(1-\rho_t)\rr{D}^\top f(w_t)\rr{D}f(w_t) + \rho_t\bm{I},$$ under various regularization schemes $\rho_t:=\rho(w_t)\in(0,1]$. We note that the above preconditioner approximates $\nabla_w^2g$ where $g(z)=\frac{1}{2}\|z-y\|_2^2$. In the analysis, we evaluate the performance of this preconditioner for general $\nu$-strongly-convex and $\mu$-smooth $g$. For quadratic loss $g(z)=\frac{1}{2}\|z-y\|_2^2$, we have $\mu=\nu=1$.

The gradient flow in the function space and the energy dissipation inequality \eqref{eqn:evi} under any damping scheme $\rho_t > 0$ in this regime are presented in the following lemma.
\begin{lemma}
    Under the Gauss-Newton gradient flow with any $(\rho_t)_{t\in[0,\infty)}$ such that $\rho_t\in(0,1]$. Then,
    \begin{align}
        \tag{GF-O} \dt{\alpha f(w_t)} &= -\frac{1}{\rho_t}\left(\bm{K}_t - \frac{1-\rho_t}{\rho_t}\bm{K}_t\left(\bm{I}+\frac{1-\rho_t}{\rho_t}\bm{K}_t\right)^{-1}\bm{K}_t\right)\nabla g(\alpha f(w_t)),\label{eqn:ei}\\
        \tag{EDI} \dt{g(\alpha f(w_t))} &\leq \frac{-\lambda_t^2}{\rho_t+(1-\rho_t)\lambda_t^2} \|\nabla g(\alpha f(w_t))\|_2^2,\label{eqn:evi}
    \end{align}
    for any $t < T$.
    \label{lemma:si-evi}
\end{lemma}
\noindent The proof of Lemma \ref{lemma:si-evi} follows from the Sherman–Morrison–Woodbury matrix identity \citep{horn2012matrix}, and can be found in Appendix \ref{app:op}.

The damping scheme $(\rho_t)_{t\in\bR^+}$ has a pivotal role on the convergence of Gauss-Newton in the overparameterized regime. In the following, we establish finite-time convergence bounds for the Gauss-Newton dynamics under constant and an adaptive damping schemes.

\subsubsection{Convergence of Gauss-Newton under Stationary Damping}
    Note that Lemma \ref{lemma:si-evi} implies $\cR(w_t)$ is monotonically decreasing for any $t\in\bR^+$. In the following, we characterize the decay rate of the optimality gap for $t < T$.

    \begin{lemma}
        Under a stationary damping scheme $\rho_t=\rho\in(0,1]$, we have
    \begin{align}
    \begin{aligned}
        V_t &\leq V_0\exp\left(\frac{-2\nu \lambda^2t}{\rho+(1-\rho)\lambda^2}\right),\\
        \|\alpha f(w_t)-f^\star\|_2^2 &\leq \frac{2V_0}{\nu}\exp\left(\frac{-2\nu t\lambda^2}{\rho+(1-\rho)\lambda^2}\right),
        \end{aligned}
        \label{eqn:edi-constant}
    \end{align}
    for any $t \in [0,T)$, where $$V_t := g(\alpha f(w_t))-g(f^\star)$$ is the optimality gap, and $f^\star$ is the unique global minimizer of $g$ in $\bR^n$.
    \label{lemma:edi}
    \end{lemma}

    \noindent The proof of Lemma \ref{lemma:edi} can be found in Appendix \ref{app:op}. The finite-time error bounds in \eqref{eqn:edi-constant} motivate a class of provably effective regularization schemes that guarantee fast convergence rates independent of $\lambda_{\min}(\bm{K}_0)$, which we will explicitly characterize next. 
    
    Note that the finite-time bounds in Lemma \ref{lemma:edi} hold for $t\in[0,T)$. In the following, we prove that the first-exit time $T=\infty$, which implies that the kernel $\bm{K}_t$ is non-degenerate for any $t>0$, if the scaling factor $\alpha\sqrt{m}>0$ is sufficiently large, which implies convergence to the (globally optimal) empirical risk minimizer $f^\star$.

    \begin{lemma}[Kernel non-degeneracy]
    Consider the Gauss-Newton dynamics with any constant damping scheme $\rho_t=\rho \in \big(0,\frac{\lambda^2}{1+\lambda^2}\big]$ for $t\geq 0$. If $\alpha \geq\frac{\mu L\sqrt{2V_0}}{\nu^{3/2}}\frac{1}{\lambda^2},$ then $T=\infty$.
\label{lemma:non-degeneracy}
\end{lemma}
\noindent Using \eqref{eqn:L}, the sufficient condition for $T=\infty$ is $$\alpha\sqrt{m} \geq\frac{\mu \sigma_2\sqrt{2g(0)\sum_{j=1}^n\|x_j\|_2^4}}{\nu^{3/2}}\frac{1}{\lambda^2}.$$ This leads us to the following convergence result for the Gauss-Newton gradient flow. 

    \begin{theorem}[Convergence in the overparameterized regime] The Gauss-Newton gradient flow \eqref{eqn:gauss-newton} with a constant damping factor $\rho_t=\rho \in (0,\frac{\lambda^2}{1+\lambda^2}],~t\in[0,\infty)$ yields the following finite-time bounds under Assumption \ref{assumption:ntk}:
\begin{align}
\begin{aligned}
    V_t &\leq V_0\cdot\exp\left(-\frac{2\nu t\lambda^2}{\rho+(1-\rho)\lambda^2}\right),\\
    \|\alpha f(w_t)-f^\star\|_2^2 &\leq \frac{\mu}{\nu}\cdot\|f^\star\|_2^2\cdot \exp\left(-\frac{2\nu t\lambda^2}{(1-\rho)\lambda^2+\rho}\right)
    \end{aligned}
    \label{eqn:delta-op}
\end{align}
for any $t \in\bR^+$ with the scaling factor 
$\alpha \geq\frac{\mu L\sqrt{V_0}}{\nu^{3/2}}\frac{1}{\lambda^2}.$
\label{thm:lm-convergence}
\end{theorem}

\noindent Note that setting $\rho = \frac{\lambda^2}{1+\lambda^2}$ in \eqref{eqn:delta-op} yields $$V_t \leq V_0 \exp(-\nu (1+\lambda^2) t)\leq g(0)e^{-\nu t}$$ for any $t\geq 0$, which implies a convergence rate independent of $\lambda^2$.

\begin{remark}[On the benefits of preconditioning]\label{remark:overparameterized}\normalfont
    The gradient flow achieves a convergence rate $\exp(-2\nu\lambda^2 t)$ \citep{chizat2019lazy}. As such, a small $\lambda$, which frequently occurs in practice (see Remark \ref{remark:geometry}), implies arbitrarily slow convergence for the gradient flow. On the other hand, with the choice $\rho_t=\frac{\lambda^2}{1+\lambda^2}$ for $t \geq 0$, the convergence rate becomes $\exp\left(-\nu t\right)$, which is \emph{independent} of $\lambda$. This indicates that preconditioning by $\Ht$ in the Gauss-Newton method yields fast convergence even when the kernel $\bm{K}_0$ is ill-conditioned.
\end{remark}

The data-dependent damping choice $\rho = \frac{\lambda^2}{1+\lambda^2}$ yields geometric convergence rate \emph{independent} of $\lambda^2$, implying that the accelerated convergence rate does not stem from time-scaling in continuous time, and it is inherent to Gauss-Newton.

\begin{theorem}[Convergence in the overparameterized regime -- discrete time] Let $$h_i(z) := \frac{z^2}{\big((1-\rho)z^2+\rho\big)^i},\quad i=1,2,$$ and $\alpha = 1$. Consider the following discrete-time regularized Gauss-Newton method in discrete time:
\begin{align}\tag{GN-DT}
\begin{aligned}
    w_{k+1}&=w_k - \eta \left[(1-\rho)\rr{D}^\top f(w_k)\rr{D} f(w_k)+\rho\bm{I}\right]^{-1}\rr{D}^\top f(w_k)\nabla g( f(w_k)),\\
    w_0&=w_\rr{init},
\end{aligned}
\end{align}
for $k\in\bN$. Under Assumption \ref{assumption:ntk}, the Gauss-Newton method with the damping factor $\rho \in \big(0,\frac{\lambda^2}{1+\lambda^2}\big]$ and the learning rate $\eta\leq \frac{h_1(\lambda)}{6 h_1^2(\rr{Lip}_f)\mu}$ yields
\begin{equation}
    V_k \leq V_0 \left(1-\eta\nu h_1(\lambda)\right)^k \mbox{ for any }k\in\bN,
\end{equation}
for $m\in\bN$ sufficiently large so that
\begin{align*}
    \frac{\sigma_2\sqrt{\sum_{j=1}^n\|x_j\|_2^4}}{\sqrt{m}}\leq \sqrt{\frac{\nu}{V_0}}\min\left\{\frac{h_1(\rr{Lip}_f)}{h_2(\lambda)\mu\eta},\frac{h_1^2(\rr{Lip}_f)}{h_2(\lambda)},\frac{\lambda\nu h_1(\lambda)}{2\sqrt{2h_2(\lambda)}}\right\},
\end{align*}
where $V_k:= g(\alpha f(w_k))-\inf_{z\in\bR^n}g(z)$. Setting $\eta = \frac{h_1(\lambda)}{6 h_1^2(\rr{Lip}_f)\mu}$ and $\rho=\frac{\lambda^2}{1+\lambda^2}$ yields $$V_k\leq V_0\left(1-\frac{1}{24}\cdot\frac{\nu}{\mu}\cdot \frac{(1+\lambda^2)^2}{h_1^2(\rr{Lip}_f)}\right)^k\leq V_0\left(1-\frac{1}{24}\cdot\frac{\nu}{\mu}\cdot \frac{1}{h_1^2(\rr{Lip}_f)}\right)^k,~k\in\bN,$$ which is independent of $\lambda$.
    \label{thm:lm-convergence-dt}
\end{theorem}
\noindent The proof of Theorem \ref{thm:lm-convergence-dt} can be found in Appendix \ref{app:op}.

 In the following, we consider a specific adaptive damping scheme that interpolates between the Gauss-Newton method and the gradient flow depending on the conditioning of the kernel $\bm{K}_t$.

    \subsubsection{Convergence of Gauss-Newton under Adaptive Damping}
    Recall that $\lambda_t^2$ is the minimum eigenvalue of $\bm{K}_t=\rr{D} f(w_t)\rr{D}^\top f(w_t)$. Define $(\rho_t)_{t\in[0,T)}$ as
    \begin{equation}
    \rho_t:=\frac{a\lambda_t^2}{1+a\lambda_t^2},\quad \mbox{for any }t\in[0,T),
    \label{eqn:adaptive-damping}
    \end{equation}
    where $a >0$ is a design parameter. We call this choice $(\rho_t)_{t\geq 0}$ the adaptive damping scheme. Note that $\rho_t > 0$ for all $t\in[0,T)$, thus the preconditioner is invertible and the differential equation in \eqref{eqn:gauss-newton} is well-defined for $t < T$.
\begin{remark}[Hybrid first- and second-order optimizers via adaptive $\rho_t$]\normalfont
     Regularization interpolates between the gradient flow and Gauss-Newton \citep{nocedal1999numerical}. $\rho_t=\frac{\lambda_t^2}{1+\lambda_t^2}$ performs this hybridization in an adaptive way depending on the spectrum of $\bm{K}_t:=\rr{D}f(w_t)\rr{D}^\top f(w_t)$:
     \begin{itemize}
         \item For ill-conditioned $\bm{K}_t$, the gradient flow has a slow convergence rate \cite{chizat2019lazy, du2018gradient}, thus the weight of the GN preconditioner $\rr{D}^\top f(w_t)\rr{D}f(w_t)$ increases for mitigation.
         \item First-order optimization achieves fast convergence for well-conditioned $\bm{K}_t$, thus $\rho_t\lessapprox 1$ in that case.
     \end{itemize}
     The impact of such an adaptive choice of $\rho_t$ is rigorously characterized in Corollary \ref{cor:lm-convergence-ada}.
\end{remark}

\begin{corollary}[Convergence under adaptive damping] The Gauss-Newton gradient flow with the regularization schedule $$\rho(w)=\frac{a\lambda_{\min}(\bm{K}(w))}{1+a\lambda_{\min}(\bm{K}(w))}$$ with any design choice $a > 0$ yields
    \begin{align}
        \begin{aligned}
        V_t &\leq V_0\cdot \exp\left(-2\nu\frac{1+a\lambda^2}{1+a}t\right),\\
        \end{aligned}
        \label{eqn:delta-op-adaptive}
    \end{align}
    for any $t\in\bR^+$ with the scaling factor
    $
        \alpha \geq \frac{\mu L\sqrt{2V_0}}{\nu^{3/2}}\cdot \frac{1+\lambda\rr{Lip}_f}{\lambda^2(1+\lambda^2)}.
    $
\label{cor:lm-convergence-ada}
\end{corollary}
\noindent The proof of Corollary \ref{cor:lm-convergence-ada} follows from a similar logic as Theorem \ref{thm:lm-convergence}, and thus omitted.

\subsection{General Loss Functions and Fast Local Convergence}\label{subsec:non-quad}
In this section, we study the convergence of the Gauss-Newton dynamics for general smooth and strongly convex $g:\bR^n\rightarrow \bR$. For simplicity, suppose that $g(f^\star) = 0$, which always holds by shifting the loss function if necessary.

Let
$$
\bm{H}_\rho(w) = (1-\rho(w))\rr{D}^\top f(w)\bm{G}(f(w))\rr{D}f(w) + \rho(w)\bm{I},
$$
for $\rho:\bR^p\rightarrow  (0, 1)$. Note that, in the quadratic case, the preconditioner $\bm{H}_\rho$ reduces to \eqref{eqn:preconditioner} since $\bm{G}( f(w)) = \bm{I}$ for any $w\in\bR^p$. We define two curvature-aware damping schemes as
\begin{align}
    \rho^{(1)}(w) &:= \frac{\lambda^2\cdot \lambda_{\max}(\bm{G}(f(w)))}{1+\lambda^2\cdot\lambda_{\max}(\bm{G}(f(w)))} \label{eqn:rho-out}\\
    \rho^{(2)}(w) &:= \frac{\sqrt{\cR(w)}}{c+\sqrt{\cR(w)}},\label{eqn:rho-out-2}
\end{align}
where $c = \rr{Lip}_f\sqrt{\mu/2}$.

We have the following result on the convergence of the Gauss-Newton dynamics.
\begin{theorem}\label{thm:non-quad-out}
    Under Assumption \ref{assumption:ntk}, 
    for $$\sqrt{m} \geq \frac{3\sigma_2}{\sqrt{\nu}}\frac{\lambda^2+\nu^{-1}}{\lambda^2+\mu^{-1}}\frac{\mu}{\lambda^2}\sqrt{\sum_{j}\|x_j\|_2^4},$$ the Gauss-Newton dynamics with the damping schedule $\rho^{(1)}$ yields
    \begin{align*}
        V_t &\leq V_0\cdot \exp\left(-\nu\lambda^2 t-\nu\int_{0}^t\lambda_{\max}^{-1}(\bm{G}(s))\rr{d}s\right),\quad t\in[0,\infty).
    \end{align*}
    Furthermore, there exists a global minimizer $w^\star\in\cB(w_0,r_0)$ such that 
    \begin{equation}
    \lim_{t\rightarrow\infty}w_t=w^\star\quad\mbox{and}\quad f(w^\star)\in\arg\min_{z\in\bR^n}~g(z).
    \label{eqn:par-convergence}
    \end{equation}
    Let $T_r:=\inf\{t>0:\|w_t-w^\star\|\leq r\}$ for $r > 0$. Assume that $w\mapsto\nabla^2\cR(w)$ is $C$-Lipschitz continuous. The, there exists $r^\star :=r^\star(C, \nabla^2\cR(w^\star))$ such that the Gauss-Newton dynamics under $\rho^{(2)}(w_t)$ for $t\in[T_{r^\star},\infty)$ yields 
    $$
        \frac{1}{2}\|w_t-w^\star\|_2^2 \leq \frac{1}{2}\|w_{T_{r^\star}}-w^\star\|_2^2\cdot e^{-(t-T_{r^\star})},\quad t \in [T_{r^\star},\infty).
    $$
\end{theorem}

\noindent Theorem \ref{thm:non-quad-out} implies that the adaptive damping schedule in \eqref{eqn:rho-out} yields convergence to the empirical minimizer in the predictor space. Furthermore, the adaptive damping schedule \eqref{eqn:rho-out} yields convergence to an optimal parameter \eqref{eqn:par-convergence}. Upon entering a certain neighborhood of the optimal parameter $w^\star$, which is ensured by the first part, switching to the damping schedule $\rho^{(2)}$ lead to fast convergence independent of $\kappa$ and $\lambda^2$. As such, the Gauss-Newton method achieves fast convergence independent of the conditioning of $\nabla^2g$ and $\bm{K}$.

    \begin{remark}[Implicit bias of Gauss-Newton for quadratic loss]
            Equation \eqref{eqn:par-convergence} implies that the parameter $w_t$ under Gauss-Newton dynamics converges to an empirical risk minimizer. Since $f\mapsto g(f)$ is strongly convex, there exists a unique minimizer $f^\star\in\bR^n$ predictor space; however there may be many empirical minimizers in parameter space that yields $f(w^\star) = f^\star$. An important question in deep learning is to characterize which minimizer $w^\star$ is chosen by the training algorithm, which is known as the implicit bias of the particular learning algorithm. The following result extends the implicit bias characterization of gradient descent in Section 12.1.1 in \cite{bach2024learning} to the Gauss-Newton method for quadratic loss function $g(f)=\frac{1}{2}\|f-y\|_2^2$.

            \begin{proposition}[Implicit bias of Gauss-Newton for quadratic loss]\label{prop:implicit-bias}
    Under Assumption \ref{assumption:ntk}, for any $u\in\bR^p$, let $\bar{f}_0(u)=\rr{D}f(w_0)(u-w_0)$ and consider $$\dot u_t = -\bm{H}^{-1}_\rho(w_0)\rr{D}^\top f(w_0)(f(u_t)-y),\quad t \geq 0,$$ with $u_0=w_0$ for $\rho \in (0,\frac{\lambda^2}{1+\lambda^2}]$. Then, $u_t\rightarrow u^\star$ as $t\rightarrow\infty$, where $u^\star$ is the solution of
        $$
            \min_{u\in\mathbb{R}^p}~\|u-w_0\|_{\bm{H}_\rho(w_0)}^2\quad \mbox{s.t.}\quad \bar f_0(u)=y.
        $$
\end{proposition}

            \noindent We provide a proof in Appendix \ref{app:op}. By Theorem 2.2 in \cite{chizat2019lazy}, this implies that $\|w^\star-u^\star\|_2 = \cO(1/\alpha^2)$. As such, Gauss-Newton converges into $\cO(1/\alpha^2)$-neighborhood of the minimum-$\bm{H}_\rho(w_0)$ norm interpolant in the kernel regime.
    \end{remark}

\subsection{Convergence of Gauss-Newton for Deep Neural Networks}\label{sec:op-deep}
In this section, we extend the analysis to deep neural networks, where the output layer $c$ is also trained. The absence of global Lipschitz-smoothness of $w\mapsto f(w)$ constitutes the main challenge. We address this challenge by leveraging tools from \cite{du2019gradient} with improved bounds in terms of the sample size $n$ and the confidence $\delta\in(0,1)$. In this subsection, we assume that $\bm{x}_i\in\mathbb{S}^{d-1},~i\in[n]$ and $$\bm{K}^{(H)}(w_0) \succeq 4\lambda^2\bm{I}$$ for some $\lambda > 0$. By Lemma B.2 in \cite{du2019gradient}, $\bm{x}_i\nparallel\bm{x}_j,~i\neq j$ implies that this assumption is satisfied for sufficiently large $m$. Let $\bm{K}^{(h)}(w):=\rr{D}_hf(w)\rr{D}_h^\top f(w)$. Then, $$\bm{K}(w):=\sum_{h=1}^H\bm{K}^{(h)}(w)\succcurlyeq\bm{K}^{(H)}(w),\quad w\in\bR^p.$$ The following lemma indicates that there exists a neighborhood $B'$ of $w_0$ such that $\lambda_{\min}(\bm{K}(w)) > 0$ for any $w\in B'$.


\begin{lemma}\label{lemma:par-dev-deep}
    Let $C=2\sigma_0\sigma_1a_\sigma$. For any $\delta \in (0,1)$, let
    \begin{equation}\label{eqn:m-deep}
        m \geq \sigma_0^4a_\sigma^2\left(\frac{C^H-1}{2(C-1)}\right)^2\log\Big(\frac{2Hn}{\delta}\Big),
    \end{equation}
    and $k_w:=1+2\sqrt{\max\left\{1,d/m\right\}}$. Then, with probability at least $1-\delta - 2He^{-m/2}$ over the random initialization, if $$\|w-w_0\|_2 \leq k\frac{k_x-1}{k_x^H-1}\min\left\{1,\lambda_{\min}(\bm{K}^{(H)}_0)/n\right\}\sqrt{m}=:R\sqrt{m},$$ for a universal constant $k > 0$, we have $$ \lambda_{\min}(\bm{K}(w)) \geq \lambda_{\min}(\bm{K}^{(H)}(w)) \geq \lambda_{\min}(\bm{K}^{(H)}(w_0))/4.$$
\end{lemma}

\noindent Lemma \ref{lemma:par-dev-deep} builds on Lemma B.4 in \cite{du2019gradient} with slightly improved dependencies on $n$ and $\delta$, and its proof can be found in Appendix \ref{app:op}. Using Lemma \ref{lemma:par-dev-deep}, we obtain the following convergence result.

    \begin{corollary}[Convergence of Gauss-Newton for deep networks]\label{cor:deep}
        For any $\delta \in (0, 1)$, if the neural network width $m$ is chosen sufficiently large such that \eqref{eqn:m-deep} holds, then the Gauss-Newton gradient flow with a constant damping $\rho \in (0, \frac{\lambda^2}{1+\lambda^2}]$ and $\alpha\sqrt{m}\geq \frac{\mu\sqrt{2V_0}}{\lambda R \nu^{3/2}}$ yields
        $$
        V_t \leq V_0\exp\Big(-\frac{2\nu\lambda^2t}{\rho + (1-\rho)\lambda^2}\Big),
        $$
        for any $t \geq 0$ with probability at least $1-\delta - 2H\exp(-m/2)$ over the random initialization.
    \end{corollary}

\section{Gauss-Newton Dynamics for Underparameterized Neural Networks: Riemannian Optimization}\label{sec:up}
In the underparameterized regime characterized by $p < n$, the kernel $\rr{D}f(w_t)\rr{D}^\top f(w_t)\in\bR^{n\times n}$ is singular for all $t \in [0,\infty)$ since $\rr{rank}(\rr{D}f(w)\rr{D}^\top f(w)) \leq p<n$ for all $w\in\bR^p$. Thus, the analysis in the preceding section, which relies on the non-singularity of $\bm{K}_t$, will not extend to this setting. This will motivate us to study the underparameterized regime by using tools from optimization on Riemannian manifolds.

In the underparameterized regime, the Gauss-Newton preconditioner $\bm{H}_\rho(\alpha f(w))\in\bR^{p\times p}$ can be non-singular without damping (i.e., $\rho=0$) since $p < n$. Thus, we consider Gauss-Newton dynamics without damping. The non-singularity of $\Ht|_{\rho=0}$ will be crucial in establishing the Riemannian optimization framework in the succeeding sections. For a detailed discussion on optimization on embedded submanifolds, which is the main toolbox in this section, we refer to \cite{boumal2023introduction, absil2008optimization, udriste2013convex}. 
\begin{assumption}
    Let $\bm{H}_0 :=  \rr{D}^\top f(w_\rr{init})\rr{D} f(w_\rr{init})$. There exists $\lambda_0 > 0$ such that $\bm{H}_0 \succcurlyeq 4\lambda_0^2\bm{I}$.
    \label{assumption:gram}
\end{assumption}
We define $$B := \Big\{w\in\bR^p:\|w-w_\rr{init}\|_2\leq r_0\Big\},$$ where \begin{equation}\label{eqn:r0}r_0 = \min\left\{\frac{\lambda_0}{L}, \frac{1}{4}\cdot\frac{\lambda_0^2\nu}{\mu L\rr{Lip}_f} \right\}.\end{equation} The following result implies the non-degeneracy of the Gram matrix $\bm{H}_0(\alpha f(w))$ on $B$.
\begin{lemma}[$w\mapsto \alpha f(w)$ is an immersion on $B$]\label{lemma:min-eig}
    For any $w\in B$, we have $$\bm{H}_0(\alpha f(w)) = \rr{D}^\top f(w)\rr{D}f(w) \succcurlyeq \lambda_0^2\bm{I},$$ which implies that $\rank(\rr{D}f(w))=p$ for all $w\in B$. Thus, $f$ is an immersion on $B$.
\end{lemma}
\noindent The result follows from \cite{chizat2019lazy}, and we provide the proof in Appendix \ref{app:up} for completeness.

Let $$T := \inf\{t>0:w_t \notin B\}$$ be the first-exit time of $B$. Then, the Gauss-Newton gradient flow is well-defined for $t < T$ since we have a non-degenerate preconditioner with $\inf_{t<T}\lambda_{\min}(\bm{H}_0(\alpha f(w_t))) \geq \lambda_0$ and a full-rank $\rr{D}f(w_t)$ for $t < T$ by Lemma \ref{lemma:min-eig}.

We first characterize the gradient flow in the output space and the energy dissipation inequality in the underparameterized regime, following Section \ref{sec:op} and \cite{chizat2019lazy}.

\begin{lemma}
    For any $w \in B$, let
    \begin{equation}
        \bm{P}(\alpha f(w)) := \rr{D}f(w)\big(\bm{H}_0(\alpha f(w))\big)^{-1}\rr{D}^\top f(w).
        \label{eqn:projection}
    \end{equation}
    Then, for any $t<T$,
    \begin{align}
        \tag{GF-Ou}\dt{\alpha f(w_t)} &= -\bm{P}(\alpha f(w_t))\nabla g(\alpha f(w_t)),\label{eqn:ei-u}\\
        \tag{EDI-u}\dt{g(\alpha f(w_t))} &= - \|\bm{P}(\alpha f(w_t))\nabla g(\alpha f(w_t))\|_2^2.\label{eqn:evi-u}
    \end{align}
    \label{lemma:si-evi-u}
\end{lemma}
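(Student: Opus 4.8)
The plan is to obtain both identities by direct differentiation along the flow, invoking only the chain rule and the algebraic structure of $\bm{P}(\alpha f(w))$. The first thing I would record is a well-posedness remark: for $t<T$ we have $w_t\in B$, so the preceding lemma gives $\bm{H}_0(\alpha f(w_t))=\rr{D}^\top f(w_t)\rr{D}f(w_t)\succcurlyeq (1-r)^2\lambda_0^2\bm{I}\succ 0$. Hence this $p\times p$ matrix is invertible, which in turn forces $\rr{D}f(w_t)$ to have full column rank $p$, so that $\bm{P}(\alpha f(w_t))$ in \eqref{eqn:projection} is well-defined. This is the only point where the exit-time restriction $t<T$ is used, and it is precisely the reason $B$ and $T$ are set up as they are.

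For \eqref{eqn:ei-u}, I would differentiate $t\mapsto \alpha f(w_t)$ via the chain rule, $\dt{\alpha f(w_t)}=\alpha\,\rr{D}f(w_t)\dot{w}_t$, and substitute the Gauss-Newton flow \eqref{eqn:gauss-newton} with $\rho=0$. The prefactor $\alpha$ cancels the $1/\alpha$ in the vector field, leaving $\dt{\alpha f(w_t)}=-\rr{D}f(w_t)\big[\bm{H}_0(\alpha f(w_t))\big]^{-1}\rr{D}^\top f(w_t)\nabla g(\alpha f(w_t))=-\bm{P}(\alpha f(w_t))\nabla g(\alpha f(w_t))$, which is exactly \eqref{eqn:ei-u}.

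For \eqref{eqn:evi-u}, I would apply the chain rule once more, now to $t\mapsto g(\alpha f(w_t))$: using \eqref{eqn:ei-u}, $\dt{g(\alpha f(w_t))}=\nabla^\top g(\alpha f(w_t))\dt{\alpha f(w_t)}=-\nabla^\top g(\alpha f(w_t))\,\bm{P}(\alpha f(w_t))\,\nabla g(\alpha f(w_t))$. The final step is the key structural observation that $\bm{P}(\alpha f(w))$ is the orthogonal projector onto the range of $\rr{D}f(w)$: it is symmetric, and $\bm{P}^2=\rr{D}f\,(\rr{D}^\top f\rr{D}f)^{-1}(\rr{D}^\top f\rr{D}f)(\rr{D}^\top f\rr{D}f)^{-1}\rr{D}^\top f=\bm{P}$. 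Therefore $\nabla^\top g\,\bm{P}\,\nabla g=\nabla^\top g\,\bm{P}^\top\bm{P}\,\nabla g=\|\bm{P}\nabla g\|_2^2$, which yields \eqref{eqn:evi-u}.

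There is no genuinely hard step here: the entire content is the identification of $\bm{P}(\alpha f(w))$ as a symmetric idempotent (the orthogonal projection onto $\mathrm{range}\,\rr{D}f(w)$) together with elementary bookkeeping. The only subtlety worth flagging is that invertibility of $\bm{H}_0(\alpha f(w_t))$, and hence the very definition of $\bm{P}(\alpha f(w_t))$ along the trajectory, is guaranteed only on the interval $[0,T)$, which is why the statement is confined to $t<T$.
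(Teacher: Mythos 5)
Your argument is exactly the one the paper uses: chain rule to get (GF-Ou) from the Gauss-Newton flow with $\rho=0$, then a second application of the chain rule together with symmetry and idempotence of $\bm{P}(\alpha f(w_t))$ to get (EDI-u). The added well-posedness remark (that $t<T$ ensures $\bm{H}_0(\alpha f(w_t))$ is invertible so $\bm{P}$ is defined) is a sensible clarification but not a departure from the paper's reasoning.
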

\begin{proof}
    By the chain rule, we obtain \eqref{eqn:ei-u} from $\dt{\alpha f(w_t)}=\alpha\rr{D}f(w_t)\dot{w}_t$ by substituting the dynamics in \eqref{eqn:gauss-newton}. \eqref{eqn:evi-u} is obtained from \eqref{eqn:ei-u} by using the fact that $\bm{P}(\alpha f(w_t))$ is idempotent.
\end{proof}
Note that $\rr{rank}(\bm{P}(\alpha f(w_t))) = p < n$ and $\bm{P}(\alpha f(w_t))$ is symmetric and idempotent, which implies that it is an orthogonal projection matrix for $t < T$. Since the minimum eigenvalue of $\bm{P}^2(\alpha f(w_t))=\bm{P}(\alpha f(w_t))$ is 0, \eqref{eqn:evi-u} only implies that $t\mapsto g(\alpha f(w_t))$ is a non-increasing function, which does not provide any useful information about the convergence rate or the optimality of the limiting predictor of the Gauss-Newton dynamics in the underparameterized setting. This motivates us to cast the problem as an optimization problem on a Riemannian manifold.

\subsection{Gauss-Newton Dynamics as a Riemannian Gradient Flow in the Output Space}
An immediate question on studying \eqref{eqn:ei-u} is the characterization of the subspace that $\bm{P}(\alpha f(w_t))$ projects the Euclidean gradient $\nabla g(\alpha f(w_t))$ onto. This motivates us to depart from the Euclidean geometry and study the output space $\alpha f(B)$ as a smooth submanifold.

For any $\alpha > 0$, let 
\begin{equation}
    \mathcal{M} := \alpha f(B) := \{\alpha f(w): w\in B\}.
\end{equation}

\noindent Note that $\bm{H}_0(\alpha f(w))$ is full-rank if $w\in B$, which implies that $\alpha \rr{D}f(w)$ is also full-rank. This has an important consequence.
\begin{lemma}\label{lemma:injection}
    $\alpha f|_B:B\rightarrow \mathcal{M}$ is an injective function, hence it is a smooth embedding on $B$.
\end{lemma}
\begin{proof}
    Consider two arbitrary points $w, w'\in B$, and let $$\gamma(t) := t w + (1-t)w',~t\in[0,1].$$ Then, we have $\dt{ f(\gamma(t))} =  \rr{D}f(\gamma(t))(w_1-w_2)$ for any $t\in[0,1],$ which implies that 
    \begin{align*}f(w)-f(w') &= \int_0^1\rr{D}f(\gamma(s))(w-w')\rr{d}s\\
    &= \rr{D}f(w_0)(w-w') + \int_0^1\left(\rr{D}f(\gamma(s))-\rr{D}f(w_0)\right)(w-w')\rr{d}s.
    \end{align*}
    First, note that 
    \begin{equation}
        \|\rr{D}f(w_0)(w-w')\|_2 = \sqrt{(w-w')\rr{D}^\top f(w_0)\rr{D}f(w_0)(w-w')} \geq 2\lambda\|w-w'\|_2,
        \label{eqn:inj-i}
    \end{equation}
    by Assumption \ref{assumption:gram}. Then, for any $s\in[0,1]$, $L$-Lipschitz continuity of $w\mapsto \rr{D}f(w)$ (where $L$ is given in \eqref{eqn:L}) implies 
    \begin{align}
        \nonumber \|(\rr{D}f(\gamma(s))-\rr{D}f(w_0))(w-w')\|_2 &\leq L\|\gamma(s)-w_0\|_2\|w-w'\|_2\\
        \nonumber &\leq L(s\|w-w_0\|_2 + (1-s)\|w'-w_0\|_2)\|w-w'\|_2\\
        &\leq \lambda \|w-w'\|_2, \label{eqn:inj-ii}
    \end{align}
    since $w',w\in B$. Using \eqref{eqn:inj-i} and \eqref{eqn:inj-ii}, we obtain
    \begin{align*}
        \|f(w)-f(w')\|_2 &\geq \|\rr{D}f(w_0)(w-w')\|_2 - \int_0^1\left\|\left(\rr{D}f(\gamma(s))-\rr{D}f(w_0)\right)(w-w')\right\|_2\rr{d}s\\
        &\geq \lambda \|w-w'\|_2.
    \end{align*}
    Hence, if $w\neq w'$, then $f(w)\neq f(w')$, which implies that $w\mapsto f(w)$ is injective on $B$. Recall that $\alpha f|_B$ is a smooth immersion on $B$ by Lemma \ref{lemma:min-eig}. Note that $B\subset \bR^p$ is a closed ball in $\bR^p$, which implies that it is a compact smooth manifold with boundary. Since $\alpha f|_B$ is an injective smooth immersion and $B$ is a compact smooth manifold with boundary, Proposition 4.22 in \cite{lee2012introduction} implies that $\alpha f|_B$ is a smooth embedding.
\end{proof}

The following result shows that the function space $\cM$ is a smooth embedded submanifold of the Euclidean space $\bR^n$.
\begin{proposition}
    $\cM$ is a $p$-dimensional smooth embedded submanifold of $\bR^n$ with boundary.
    \label{prop:submanifold}
\end{proposition}
\begin{proof}
    Since $B$ is a closed ball in $\bR^p$, it is a smooth manifold with boundary. Also, Lemma \ref{lemma:injection} implies that $\alpha f:B\rightarrow\bR^n$ is a smooth embedding. Hence, $\mathcal{M}:=\alpha f(B)$ is a $p$-dimensional embedded smooth submanifold (with boundary) of $\bR^n$ by Proposition 5.49(b) in \cite{lee2012introduction}.
\end{proof}

The following result implies that $(\cM,\langle\mdot,\mdot\rangle^\cM)$ is a Riemannian submanifold of the function space $\bR^n$ of predictors.
\begin{lemma}
    For any $w \in B$, let
    \begin{equation}
        \cT_{\alpha f(w)}\cM := \{\alpha \rr{D}f(w)z: z\in\bR^p\} = \rr{Im}(\rr{D}f(w)).
    \end{equation}
    Then, $\cT_{\alpha f(w)}\cM$ is the tangent space of $\alpha f(w)\in\cM$. Also, for any $w\in B$ and $u,v\in\cT_{\alpha f(w)}\cM$, $\langle u,v\rangle^\cM_{\alpha f(w)}:= \langle u, v\rangle=u^\top v$ is a Riemannian metric on $\cM$. Consequently, $\left(\cM,\langle\cdot,\cdot\rangle^\cM\right)$ is a Riemannian submanifold of $\bR^n$.
\end{lemma}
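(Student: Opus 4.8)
The plan is to identify $\cT_{\alpha f(w)}\cM$ using velocity vectors of smooth curves in $\cM$, and then to recognize $\langle\cdot,\cdot\rangle^\cM$ as the metric obtained by restricting the ambient Euclidean inner product, which is the defining property of a Riemannian submanifold.

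First I would fix $w\in B$ and $z\in\bR^p$. Since $B$ is open, $w+tz\in B$ for $|t|$ small, so the curve $\gamma_z(t):=\alpha f(w+tz)$ takes values in $\cM$ near $t=0$, with $\gamma_z(0)=\alpha f(w)$ and $\dot\gamma_z(0)=\alpha\rr{D}f(w)z$. By the characterization of the tangent space of an embedded submanifold as the set of velocities at a point of the smooth curves lying in it (\cite{boumal2023introduction}), this gives $\alpha\rr{D}f(w)z\in\cT_{\alpha f(w)}\cM$; letting $z$ range over $\bR^p$ and using $\alpha>0$ yields $\rr{Im}(\rr{D}f(w))=\{\alpha\rr{D}f(w)z:z\in\bR^p\}\subseteq\cT_{\alpha f(w)}\cM$.

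Next I would upgrade this inclusion to an equality by a dimension count. For $w\in B$ the matrix $\bm{H}_0(\alpha f(w))=\rr{D}^\top f(w)\rr{D}f(w)$ is nonsingular by the preceding lemma, so $\rr{D}f(w)$ has full column rank $p$ and $\dim\rr{Im}(\rr{D}f(w))=p$; on the other hand $\dim\cT_{\alpha f(w)}\cM=p$ since $\cM$ is a $p$-dimensional embedded submanifold by Theorem \ref{thm:submanifold}. An inclusion of linear subspaces of equal finite dimension is an equality, hence $\cT_{\alpha f(w)}\cM=\rr{Im}(\rr{D}f(w))$ as claimed.

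Finally, for the metric claim, bilinearity, symmetry and positive-definiteness of $(u,v)\mapsto u^\top v$ on each $\cT_{\alpha f(w)}\cM$ are inherited from the Euclidean inner product on $\bR^n$, and smoothness of $w\mapsto\langle\cdot,\cdot\rangle^\cM_{\alpha f(w)}$ holds because it is the restriction of the constant (hence smooth) ambient metric to the tangent bundle of the embedded submanifold $\cM$. This is exactly the standard construction of the Riemannian metric induced on an embedded submanifold of a Euclidean space (\cite{boumal2023introduction}), which by definition makes $(\cM,\langle\cdot,\cdot\rangle^\cM)$ a Riemannian submanifold of $\bR^n$. I expect the only delicate point to be the tangent-space identification: the nontrivial half is showing that \emph{every} tangent vector is of the form $\alpha\rr{D}f(w)z$, which here follows cleanly from the dimension count rather than from a direct surjectivity argument; the remainder is the routine verification of the induced-metric axioms.
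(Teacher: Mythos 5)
Your proof is correct, but it runs the tangent-space identification in the opposite direction from the paper. The paper takes an \emph{arbitrary} smooth curve $c$ in $\cM$ with $c(0)=\alpha f(w)$, lifts it to a smooth curve $\gamma$ in $B$ with $c=\alpha f\circ\gamma$, and applies the chain rule to get $c'(0)=\alpha\rr{D}f(w)\gamma'(0)\in\rr{Im}(\rr{D}f(w))$, i.e.\ it proves the inclusion $\cT_{\alpha f(w)}\cM\subseteq\rr{Im}(\rr{D}f(w))$; you instead prove $\rr{Im}(\rr{D}f(w))\subseteq\cT_{\alpha f(w)}\cM$ by pushing forward the straight lines $t\mapsto w+tz$ and then close the gap with a dimension count, using that $\bm{H}_0(\alpha f(w))$ is nonsingular on $B$ (so $\rr{D}f(w)$ has rank $p$) and that $\dim\cM=p$ by Theorem \ref{thm:submanifold}. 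Each route has a virtue: yours avoids the curve-lifting step entirely, which is the one delicate point in the paper's argument (the existence and smoothness of the lift $\gamma$ implicitly uses that $\alpha f|_B$ is an injective immersion onto the embedded submanifold $\cM$, hence a diffeomorphism onto its image), at the modest price of invoking the rank lemma and the dimension of $\cM$; the paper's route characterizes every tangent vector directly, though strictly speaking it too needs the matching dimensions (or the reverse inclusion you prove) to turn its one inclusion into the stated equality, a point it leaves implicit. Your treatment of the metric claim --- restriction of the ambient Euclidean inner product to the tangent spaces of an embedded submanifold is a Riemannian metric --- is exactly the paper's, with the same reference.
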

\begin{proof}
    Note that the tangent space for $\cM=\alpha f(B)$ is defined as $$\cT_{\alpha f(w)}\cM:=\{c'(0): c:I\rightarrow\cM\mbox{ is smooth},~c(0)=\alpha f(w)\},$$ where $I\subset \bR$ is any interval with $0\in I$ \citep{absil2010optimization, lee2018introduction, boumal2023introduction}. Let $I = (-\epsilon,\epsilon)$ for $\epsilon > 0$. Since $f:\bR^p\rightarrow\bR^n$ is a smooth embedding, if $c:I\rightarrow\cM$ is a smooth curve on $\cM=\alpha f(B)$, then there exists a smooth curve $\gamma:I\rightarrow B$ such that $c(t) = f(\gamma(t))$ for $t\in I$, with $\gamma(0)=w$. Then, we have $$\dt{c(t)}=\dt{\alpha f(\gamma(t))} = \rr{D}f(\gamma(t))\dt{\gamma(t)},$$ by the chain rule. Thus, $c'(0)=\rr{D}f(w)\gamma'(0)\in\rr{Im}(\rr{D}f(w))$. The second part of the claim is a direct consequence of Proposition \ref{prop:submanifold}, as the restriction of the Euclidean metric to an embedded submanifold of $\bR^n$ ($\cM$ in our case, by Proposition \ref{prop:submanifold}) is a Riemannian metric \citep{boumal2023introduction}.
\end{proof}

The following result shows that the Gauss-Newton dynamics in the underparameterized regime corresponds to a Riemannian gradient flow in the function space.
\begin{proposition}[Gauss-Newton as a Riemannian gradient flow]
    For any $\alpha f(w)\in\cM$, $\bm{P}(\alpha f(w))$ is the projection operator onto its tangent space $\cT_{\alpha f(w)}\cM$, i.e., $$\bm{P}(\alpha f(w))z = \underset{y\in\cT_{\alpha f(w)}\cM}{\arg\min}~\|y-z\|^2.$$ Furthermore,
    \begin{equation}
        \rr{grad}_{\alpha f(w)}^\cM g(\alpha f(w)) := \bm{P}(\alpha f(w))\nabla g(\alpha f(w))
    \end{equation}
    is the Riemannian gradient of $g$ at $\alpha f(w)\in\cM$. Consequently, the Gauss-Newton dynamics in \eqref{eqn:gauss-newton}, i.e., $$\dt{\alpha f(w_t)}=-\bm{P}(\alpha f(w_t))\nabla g(\alpha f(w_t)) = \rr{grad}_{\alpha f(w_t)}^\cM g(\alpha f(w_t)),$$ corresponds to Riemannian gradient flow on $(\cM,\langle\cdot,\cdot\rangle^\cM)$.
    \label{prop:riemannian-gf}
\end{proposition}
\begin{proof}[of Proposition \ref{prop:riemannian-gf}]
    Since $\rank(\rr{D}f(w))=p$ for any $w\in B$, $\bm{P}(\alpha f(w))$ is well-defined on $B$. First, notice that $\bm{P}^\top(\alpha f(w))=\bm{P}(\alpha f(w))$ and $\bm{P}^2(\alpha f(w))=\bm{P}(\alpha f(w))$ (i.e., $\bm{P}(\alpha f(w))$ is idempotent), thus $\bm{P}(\alpha f(w))$ is a projection matrix onto a $p$-dimensional subspace of $\bR^n$. Since $\cT_{\alpha f(w)}\cM=\rr{Im}(\rr{D}f(w))$, let $$\pi_{\cT_{\alpha f(w)}\cM}[z]:=\argmin{u\in\cT_{\alpha f(w)}\cM}\|u-z\|_2^2=\argmin{v\in\bR^p}\|z-\rr{D}f(w)v\|_2^2.$$ By using first-order condition for global optimality, we have $2\rr{D}^\top f(w)(\rr{D}f(w)v^\star-z)=0$, which implies that $\rr{D}f(w)v^\star = \bm{P}(\alpha f(w))z \in \pi_{\cT_{\alpha f(w)}\cM}[z]$ is the unique minimizer. As such, $$\grad_{\alpha f(w_t)}^\cM g(\alpha f(w_t)) = \pi_{\cT_{\alpha f(w_t)}\cM}[\nabla g(\alpha f(w_t))],$$ thus it is the Riemannian gradient of $g(\alpha f(w_t))$ by Prop. 3.61 in \cite{boumal2023introduction}.
\end{proof}

In Figure \ref{fig:manifold}, we illustrate the training trajectories of a single-neuron (i.e., $m=1$) with $\tanh$ activation function in the function and parameter spaces on a problem with $n = 3$ random data points of dimension $d=2$. The embedded submanifold $\cM=\alpha f(B)$ is the two-dimensional surface in the function space $\bR^3$, as illustrated in Figure \ref{fig:func-space}.

\begin{figure}[!ht]
    \centering
    \begin{subfigure}[t]{0.5\textwidth}
        \centering
        \includegraphics[width=.9\linewidth]{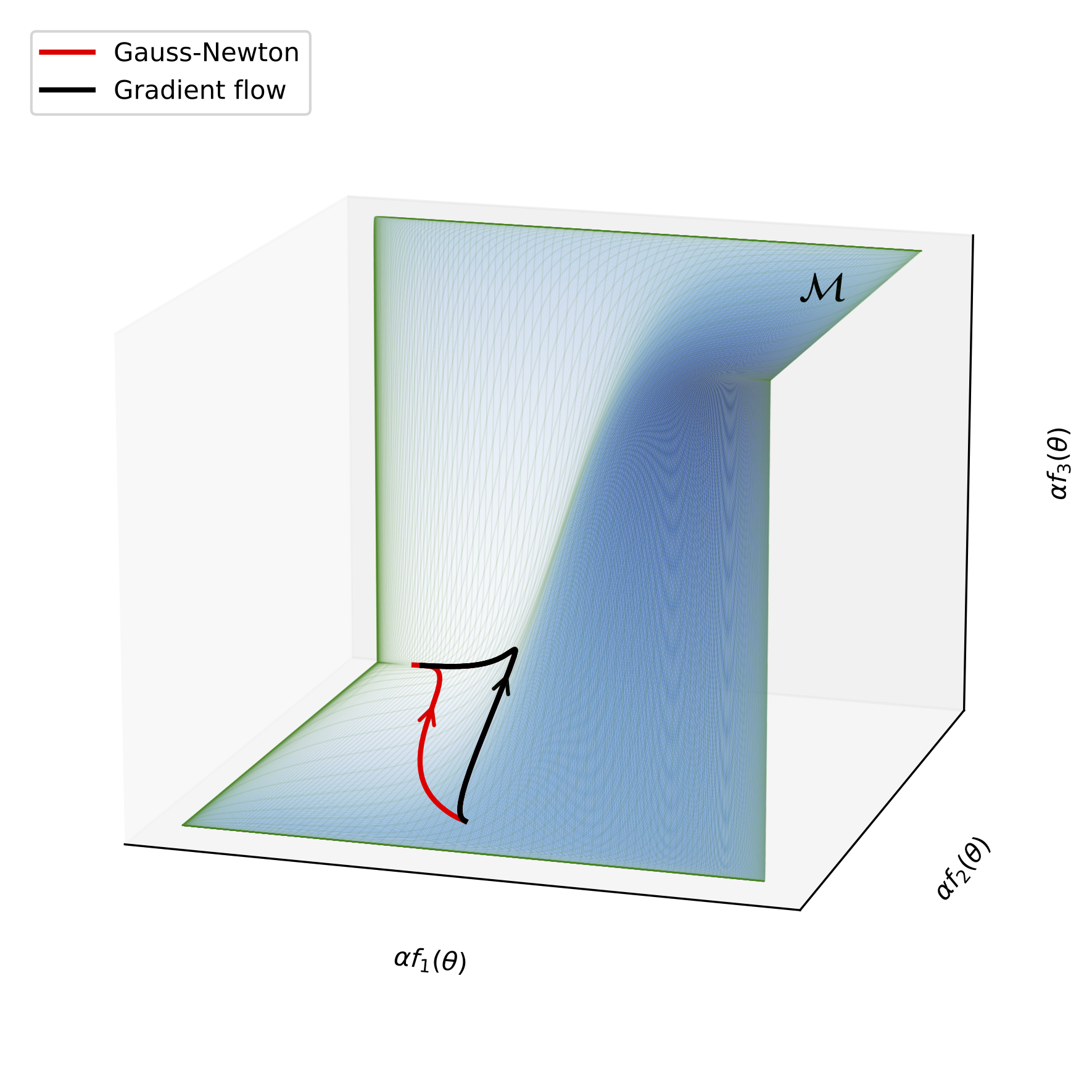}
        \caption{Evolution of the predictor on the two-dimensional Riemannian submanifold $\cM$ of the function space $\bR^3$.}
        \label{fig:func-space}
    \end{subfigure}%
    ~ 
    \begin{subfigure}[t]{0.5\textwidth}
        \centering
        \includegraphics[width=.9\linewidth]{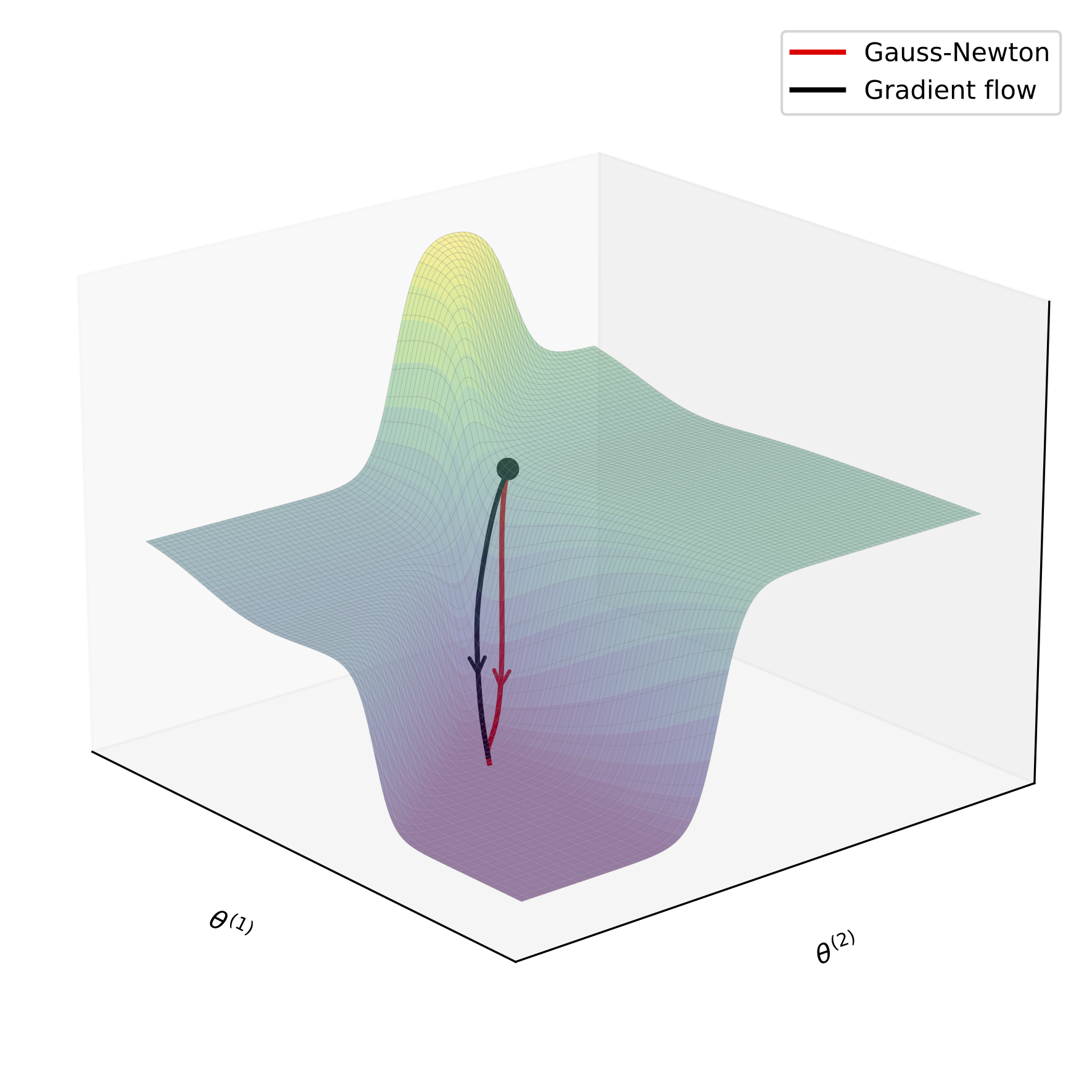}
        \caption{Evolution in the parameter space.}
        \label{fig:par-space}
    \end{subfigure}
    \caption{Trajectories of the Gauss-Newton and the gradient flow in the function space and the parameter space for $n=3$ and $p=2$. Gauss-Newton induces Riemannian gradient flow on $\cM$.}
    \label{fig:manifold}
\end{figure}

As a consequence of Prop. \ref{prop:riemannian-gf}, we will utilize the tools from optimization on smooth Riemannian manifolds to analyze the convergence and optimality of the predictor under the Gauss-Newton dynamics.

\subsection{Convergence of the Gauss-Newton Dynamics in the Underparameterized Regime}
In this section, we will establish various geodesic convexity and Lipschitz-continuity results on $\cM$, which will lead us to the convergence bounds for \eqref{eqn:ei-u}. 

We first prove the geodesic convexity of $\cM$, which will play a fundamental role in the convergence proof.

\begin{lemma}\label{lemma:g-convexity-M}
    $\cM$ is a geodesically convex set. 
\end{lemma}
\noindent Lemma \ref{lemma:g-convexity-M} implies that for any points $\alpha f(w), \alpha f(w')\in\cM$, there exists a geodesic segment $c:[0,1]\rightarrow\cM$ such that $c(0)=\alpha f(w)$ and $c(1)=\alpha f(w')$. Since $\alpha f|_B$ is generally non-convex and $\cM$ is a curved manifold, this result does not automatically hold. The existence of such a geodesic segment is guaranteed when $\sup_{w,w'\in B}\|\alpha f(w)-\alpha f(w')\|\leq\frac{\pi}{\Lambda}$, where $$\Lambda = \sup_{\alpha f(w)\in\cM}\sup_{u:\|u\|_{\alpha f(w)}=1}\|II_{\alpha f(w)}(u,\cdot)\|_\rr{op},$$ and $II_p$ is the second fundamental form \citep{alexander1993geometric, BridsonHaefliger1999}. To establish this condition, we prove a uniform upper bound on $\|II_{\alpha f(w)}(u,\cdot)\|_\rr{op}$ via a variational formulation in Lemma \ref{lemma:curvature}, and use metric compactness of $\cM$ to conclude the result. The complete proof can be found in Appendix \ref{app:up}.


In the following, we establish key geodesic regularity conditions for the loss function on the manifold $\cM$ from first principles, which constitutes an essential part of the convergence analysis. To that end, let $$\cS := \{y\in\cM:g(y) \leq g(0)\},$$ which is a nonempty set since $g(\alpha f(w_0))=0$, thus $\alpha f(w_0)\in\cS$. As a consequence of the energy dissipation inequality \eqref{eqn:evi-u}, $t\mapsto g(\alpha f(w_t))$ is a non-increasing function on $t<T$, thus under Gauss-Newton dynamics, the optimization trajectory lies in $\cS$: $$\{\alpha f(w_t):t\in[0,T)\}\subset \cS \subset \cM.$$

\begin{theorem}[Geodesic strong convexity and smoothness of $g$]
    For any \begin{equation}\label{eqn:alpha0}\alpha \geq \frac{4L\mu}{\nu\lambda_0^2}\max\left\{\|f^\star\|, \sqrt{\frac{2g(0)}{\nu}}\right\}=:\alpha_0,\end{equation} the following are true.
    \begin{enumerate}[label=(\alph*)]
        \item The loss function $g|_\cM:\cM\rightarrow\bR$ is geodesically convex, i.e., for any $z\in\cM, v\in\cT_{z}\cM$ and $c(t)=\rr{Exp}_{z}(vt),~t\in[0,1]$, we have $$g(z) + t\langle \grad_{z}^\cM g(z), v\rangle_{z}^\cM \leq g(\rr{Exp}_{z}(tv)),$$
    for any $t\in[0,1]$.
        \item The sublevel set $$\cS := \{z\in\cM: g(z)\leq g(0)\}$$ is geodesically convex.
        \item $g|_\cS$ is a $\frac{\nu}{2}$-geodesically strongly convex function on $\cS$: for any $z\in\cS, v\in\cT_{z}\cM$ and $c(t)=\rr{Exp}_{z}(vt)$ for $t\in[0,1]$, we have $$g(z) + t\langle \grad_{z}^\cM g(z), v\rangle_{z}^\cM \leq g(\rr{Exp}_{z}(tv))-t^2\frac{\nu}{4}\|v\|^2,$$
    for any $t\in[0,1]$.
    \item $g|_\cS$ has geodesically $\frac{3\mu}{2}$-Lipschitz continuous gradients: for any $z\in\cS, v\in\cT_{z}\cM$ and $c(t)=\rr{Exp}_{z}(vt)$ for $t\in[0,1]$, we have $$g(z) + t\langle \grad_{z}^\cM g(z), v\rangle_{z}^\cM \geq g(\rr{Exp}_{z}(tv))-t^2\frac{3\mu}{4}\|v\|_{z}^2,$$
    for any $t\in[0,1]$.
    \end{enumerate}
    \label{thm:geodesic-regularity}
\end{theorem}
\noindent The proof of Theorem \ref{thm:geodesic-regularity}, which is provided in Appendix \ref{sec:up}, relies on establishing lower bounds on the minimum singular value of the Riemannian Hessian of $g$ on $\cM$ and $\cS$.

\begin{remark}[Controlling the curvature by $\alpha$]
 A key step in the proof of Theorem \ref{thm:geodesic-regularity} is to bound $\big\|\dt{}\big[\bm{P}(\alpha f(\gamma_t)\big]\big|_{t=0}\big\|$ where $\gamma_t:[0,1]\rightarrow B$ is any smooth curve such that $\gamma_0 = w\in B$ and $\dt{\alpha f(\gamma_t)}|_{t=0}=u\in\cT_{\alpha f(w)}\cM\backslash\{0\}$, which corresponds to the magnitude of the second fundamental form along the direction $u$, and thus quantifies curvature. In order to establish the geodesic regularity of $g$, we choose $\alpha$ sufficiently large to control the curvature of $\cM$.
\label{remark:curvature}
\end{remark}

Since $\cM$ is a compact and smooth embedded Riemannian manifold, there exists an in-class optimal predictor $f^\star_\cM$ such that 
$$
g(f^\star_\cM)=\inf\{g(z):z\in\cM\}.
$$
We make the following representational assumption regarding the existence of a critical point in the interior $\cM$.
\begin{assumption}\label{assumption:interior}
    There exists $w_\alpha^\star \in \rr{int}(B)$ such that $\grad_{\alpha f(w_\alpha ^\star)}^\cM~g(\alpha f(w_\alpha^\star)) = 0$.
\end{assumption}
\noindent By the first-order condition for optimality for geodesically convex functions (Prop. 4.6 and Corollary 11.22 in \cite{boumal2023introduction}), Assumption \ref{assumption:interior} just states that the optimal predictor in $\cM$ is in the interior.

Recall from the analyses in Section \ref{sec:op} (e.g., Theorem \ref{thm:lm-convergence}) that the convergence analysis relies on the following consequences of Euclidean strong convexity and Lipschitz smoothness:
\begin{enumerate}[label=(\roman*)]
    \item $\|\nabla g(\alpha f(w))\|\leq \mu\|\alpha f(w)-f^\star\|\leq \mu\sqrt{\frac{2}{\nu}(g(\alpha f(w))-g(f^\star))}$,
    \item the Polyak-\L ojasiewicz inequality $\|\nabla g(\alpha f(w))\|^2 \geq 2\nu (g(\alpha f(w))-g(f^\star))$.
\end{enumerate}
By using the geodesic regularity conditions established in Theorem \ref{thm:geodesic-regularity}, the following lemma establishes Riemannian analogues of the above results on the manifold $\cM$.

\begin{lemma}\label{lemma:riemannian-pl}
    For any sufficiently large $\alpha\geq \alpha_0$ where $\alpha_0$ is given in \eqref{eqn:alpha0}, for all $t\in[0,T)$, there exists a tangent vector $v_t\in\cT_{\alpha f(w^\star)}\cM$ such that
    \begin{align}
        \label{eqn:pl-u}&\frac{\nu}{4}\|v_t\|_{2}^2 \leq V_t \leq \frac{1}{\nu}\|\grad_{\alpha f(w_t)}^\cM g(\alpha f(w_t))\|_2^2\\
        &\|\grad_{\alpha f(w_t)}^\cM g(\alpha f(w_t))\|_2\leq \frac{3\mu}{2}\|v_t\|_2,
    \end{align}
    where $$V_t:=g(\alpha f(w_t))-g(\alpha f(w_\alpha^\star))=g(\alpha f(w_t))-\inf_{z\in\alpha f(B)}g(z),\quad t\geq 0.$$
\end{lemma}

In the following, we present the main convergence and optimality result for the Gauss-Newton gradient flow in the underparameterized regime, which is the main result in this section.

\begin{theorem}[Convergence in the underparameterized regime]
    For the scaling factor $$\alpha = \max\left\{\alpha_0, \frac{3\mu\sqrt{g(0)}}{\lambda_0\nu^\frac{3}{2}r_0}\right\},$$ the Gauss-Newton dynamics achieves
    \begin{equation}
        V_t \leq g(0)\exp(-\nu t)\mbox{ for any }t \in [0,\infty),
    \end{equation}
    under Assumptions \ref{assumption:gram}-\ref{assumption:interior}, where $V_t := g(\alpha f(w_t))-g(\alpha f(w_\alpha^\star))$. Furthermore, in the same setting, \begin{equation}\|w_t-w_0\|_2\leq r_0\mbox{ and }\rr{D}^\top f(w_t)\rr{D}f(w_t)\succcurlyeq \lambda_0^2\bm{I},
    \label{eqn:deviation}
    \end{equation} 
    for any $t\in\bR^+$.
    \label{thm:gf-convergence}
\end{theorem}

\begin{proof}
    From Lemma \ref{lemma:si-evi-u}, recall that we have $$\dt{V_t} = -\|\grad_{\alpha f(w_t)}^\cM g(\alpha f(w_t))\|_2^2~\mbox{for any }~t\in(0,T).$$
    For $\alpha \geq \alpha_0$, $g|_\cS$ is $\nu/2$ geodesically strong convex by Theorem \ref{thm:geodesic-regularity}. Hence, the Riemannian Polyak-\L ojasiewicz condition in Lemma \ref{lemma:riemannian-pl} implies that $$\dot{V}_t\leq -\nu V_t.$$ Thus, Grönwall's lemma implies
    \begin{equation}
        V_t\leq V_0\exp(-\nu t)\mbox{ for any }t\in[0,T).
        \label{eqn:error-bound-u}
    \end{equation}
    To show that $T=\infty$, take $t \in [0,T)$. Then,
    \begin{align}
        \nonumber \|w_t-w_0\|_2 &\leq \int_0^t\|\dot{w}_s\|\rr{d}s\\
        &= \frac{1}{\alpha}\int_0^t \|\nabla g(\alpha f(w_t))\|_{\bm{A}_s}\rr{d}s,\label{eqn:dev-bound}
    \end{align}
    where $$\bm{A}_s := \rr{D}f(w_s)\bm{H}_0^{-2}(\alpha f(w_s))\rr{D}^\top f(w_s) \mbox{ for }s<T.$$
Since $s < t < T$, we have $w_s\in B$, thus $\bm{H}_0(\alpha f(w_s))\succcurlyeq \lambda_0^2\bm{I}$. This implies that
\begin{align}
    \nonumber \|\nabla g(\alpha f(w_s))\|_{\bm{A}_s}^2 &\leq \frac{1}{\lambda_0^2}\|\nabla g(\alpha f(w_s))\|_{\bm{P}(\alpha f(w_s))}^2\\
     \nonumber &= \frac{1}{\lambda_0^2}\|\bm{P}(\alpha f(w_s))\nabla g(\alpha f(w_s))\|_2^2\\
     &= \frac{1}{\lambda_0^2}\|\grad_{\alpha f(w_s)}^\cM g(\alpha f(w_s))\|_{2}^2.\label{eqn:grad-bound-i}
\end{align}
By using \eqref{eqn:pl-u} in Lemma \ref{lemma:riemannian-pl}, we have 
\begin{align*}
    \|\grad_{\alpha f(w_s)}^\cM g(\alpha f(w_s))\|_{\alpha f(w_s)}^2 \leq \frac{9\mu^2}{4}\|v_s\|_2^2\leq \frac{9\mu^2}{\nu}V_s.
\end{align*}
Using the error bound \eqref{eqn:error-bound-u}, we obtain
\begin{equation}
    \|\grad_{\alpha f(w_s)}^\cM g(\alpha f(w_s))\|_{\alpha f(w_s)}\leq \frac{3\mu\sqrt{V_s}}{\sqrt{\nu}}\leq \frac{3\mu\sqrt{V_0}}{\sqrt{\nu}}e^{-\nu s} \leq \frac{3\mu\sqrt{g(0)}}{\sqrt{\nu}}e^{-\nu s}\mbox{ for any }s < T.
    \label{eqn:grad-bound-ii}
\end{equation}
Substituting \eqref{eqn:grad-bound-i} and \eqref{eqn:grad-bound-ii} into \eqref{eqn:dev-bound}, we obtain 
\begin{align}
    \nonumber \|w_t-w_0\| &\leq \frac{3\mu\sqrt{g(0)}}{\alpha\lambda_0\sqrt{\nu}}\int_0^t\exp(-\nu s)\rr{d}s\leq \frac{3\mu\sqrt{g(0)}}{\alpha\lambda_0\sqrt{\nu^3}}.
\end{align}
Hence, $\alpha \geq \frac{3\mu\sqrt{g(0)}}{\lambda_0r_0\nu^\frac{3}{2}}$ yields $\|w_t-w_0\|\leq r_0$ and $\rr{D}^\top f(w_t)\rr{D}f(w_t) \succcurlyeq \lambda_0^2\bm{I}$. Hence, $T=\infty$.
\end{proof}

\begin{remark}[Benefits of preconditioning in the underparameterized regime] \normalfont
We have the following observations on the superiority of the Gauss-Newton gradient flow in the underparameterized regime compared to the gradient flow.
\begin{itemize}
    \item \textbf{Exponential convergence rate for the last-iterate.} The Gauss-Newton gradient flow achieves \emph{exponential} convergence rate $\exp(-\Omega(t))$ for the last-iterate in the underparameterized regime. The convergence rate for gradient descent in this regime is subexponential \citep{ji2020polylogarithmic, cayci2024convergence} under compatible assumptions.

    \item \textbf{Convergence without explicit regularization.} The convergence result in Theorem \ref{thm:gf-convergence} holds \emph{without} any explicit regularization scheme, e.g., early stopping or projection. The Gauss-Newton gradient flow converges self-regularizes in the underparameterized setting as in \eqref{eqn:deviation}. In the underparameterized regime, the gradient descent dynamics requires an explicit regularization scheme to control the parameter movement $\|w_t-w_0\|$, e.g., early stopping \citep{ji2020polylogarithmic, cayci2024convergence} or projection \citep{cai2019neural, cayci2023sample, cayci2024convergence}.
    \end{itemize}
    The Riemannian gradient flow interpretation of the Gauss-Newton dynamics is key in establishing the above results. We should note that we do not follow the analysis based on projected subgradient descent as in the analyses of gradient descent in the underparameterized regime, which leads to these fundamental differences \citep{ji2020polylogarithmic, cayci2024convergence}.
    \begin{itemize}
    \item \textbf{$\lambda_0$-independent convergence rate.} The convergence rate in Theorem \ref{thm:gf-convergence} is independent of the minimum eigenvalue $\lambda_0$ of the Gram matrix $\rr{D}^\top f\rr{D}f$, which indicates that the performance of the Gauss-Newton dynamics is resilient against ill-conditioned Gram matrices due to the geometry of the input data points $\{x_j\}_{j=1,\ldots,n}\subset \bR^d$.
    
\end{itemize}
    
\end{remark}



\begin{remark}[Regularization by the scaling factor $\alpha$]\normalfont\label{remark:alpha}
    Theorem \ref{thm:gf-convergence} implies that the scaling factor $\alpha$ controls $\|w_t-w_0\|_2$. Equation \eqref{eqn:deviation} indicates that 
    \begin{itemize}
        \item $\alpha > 0$ should be large enough to ensure that $\rr{D}^\top f(w_t)\rr{D}f(w_t),~t\in\bR^+$ is strictly positive-definite,
        \item $\alpha \uparrow \infty$ leads to a smaller parameter set over which $g\circ (\alpha f)$ is optimized, which leads to increasing inductive bias $\inf_{y\in\bR^n}g(f)-\inf_{w\in B}g(\alpha f(w))$.
    \end{itemize}
    This phenomenon is unique to the underparameterized setting (since the optimality gap is defined with respect to the best in-class predictor unlike the overparameterized case), and will be illustrated in the numerical example in Appendix \ref{sec:num}.
\end{remark}

\section{Conclusions}
In this work, we analyzed the Gauss-Newton dynamics for underparameterized and overparameterized neural networks in the near-initialization regime, and demonstrated that the recent optimization tools developed for embedded submanifolds can provide important insights into the training dynamics of neural networks. As a follow-up to this work, the performance analysis of the Gauss-Newton method in mean-field regime \citep{chizat2018global, mei2019mean, sirignano2020mean}, is an interesting future direction.



\newpage

\appendix
\section{Omitted Proofs in Section \ref{sec:op}}\label{app:op}

\begin{proof}[of Lemma \ref{lemma:si-evi}]
    By the chain rule, we obtain
    \begin{align*}
        \dt{\alpha f(w_t)}&=-\alpha\rr{D}f(w_t)\dt{w_t}\\
        &=-\rr{D}f(w_t)[\Ht]^{-1}\rr{D}^\top f(w_t)\nabla g(\alpha f(w_t)).
    \end{align*}
    Since $t < T$, the empirical kernel matrix $\bm{K}_t$ is non-singular. Thus, by applying the Sherman-Morrison-Woodbury matrix identity \citep{horn2012matrix} to the above, we obtain
    \begin{align*}
        \rr{D}f(w_t)&[\Ht]^{-1}\rr{D}^\top f(w_t) \\&= \frac{1}{\rho_t}\rr{D}f(w_t)\left[\bm{I}-\frac{1-\rho_t}{\rho_t}\rr{D}f(w_t)\left[\bm{I}+\frac{1-\rho_t}{\rho_t}\rr{D}f(w_t)\rr{D}^\top f(w_t)\right]^{-1}\rr{D}f(w_t)\right]\rr{D}^\top f(w_t)\\
        &= \frac{1}{\rho_t}\Big(\bm{K}_t - \frac{1-\rho_t}{\rho_t}\bm{K}_t\Big(\bm{I}+\frac{1-\rho_t}{\rho_t}\bm{K}_t\Big)^{-1}\bm{K}_t\Big).
    \end{align*}
    This gives the gradient flow in the output space \eqref{eqn:ei}.
    
    For the energy dissipation inequality \eqref{eqn:evi}, first note that we have
    \begin{align}
       \nonumber \dt{g(\alpha f(w_t))} &= \dt{V_t}\\
        \nonumber &= \nabla^\top g(\alpha f(w_t)) \dt{\alpha f(w_t)}\\
        &= - \|\nabla g(\alpha f(w_t))\|_{\rr{D}f(w_t)[\Ht]^{-1}\rr{D}^\top f(w_t)}^2\label{eqn:evi-identity}
    \end{align}
    where the last identity comes from \eqref{eqn:ei}. As such, we need to characterize the spectrum, particularly the minimum singular value of $\bm{K}_t - \frac{1-\rho_t}{\rho_t}\bm{K}_t\big(\bm{I}+\frac{1-\rho_t}{\rho_t}\bm{K}_t\big)^{-1}\bm{K}_t$. To that end, let $(\gamma,u)\in\bR\times\bR^n$ be any eigenvalue-eigenvector pair for the matrix $\bm{K}_t$. Then,
    \begin{align*}
        \big(\bm{K}_t - \frac{1-\rho_t}{\rho_t}\bm{K}_t\big(\bm{I}+\frac{1-\rho_t}{\rho_t}\bm{K}_t\big)^{-1}\bm{K}_t\big)u &= \gamma u - \frac{1-\rho_t}{\rho_t}\frac{\gamma^2}{1+\frac{1-\rho_t}{\rho_t}\gamma}u\\
        &= \frac{\gamma\frac{\rho_t}{1-\rho_t}}{\frac{\rho_t}{1-\rho_t}+\gamma}u,
    \end{align*}
    which implies that $(\frac{\gamma\rho_t}{(1-\rho_t)\gamma+\rho_t}, u)$ is an eigenpair for $\bm{K}_t - \frac{1-\rho_t}{\rho_t}\bm{K}_t\big(\bm{I}+\frac{1-\rho_t}{\rho_t}\bm{K}_t\big)^{-1}\bm{K}_t$, and therefore $\rr{D}f(w_t)[\Ht]^{-1}\rr{D}^\top f(w_t)$ has a corresponding eigenpair $\left(\frac{\gamma}{(1-\rho_t)\gamma+\rho_t},u\right)$. Then, for any $(\rho_t)_{t\geq 0}$ with $\inf_{t\geq 0}\rho_t>0$:
    \begin{equation}
        \|\nabla g(\alpha f(w_t))\|_{\rr{D}f(w_t)[\Ht]^{-1}\rr{D}^\top f(w_t)}^2 \geq \|\nabla g(\alpha f(w_t))\|_2^2\cdot \frac{\lambda_t^2}{(1-\rho_t)\lambda_t^2+\rho_t}.
        \label{eqn:grad-norm-gn}
    \end{equation}
    Substituting \eqref{eqn:grad-norm-gn} into \eqref{eqn:evi-identity} concludes the proof of Lemma \ref{lemma:si-evi}.
    \end{proof}

\begin{proof}[of Lemma \ref{lemma:edi}]
        Note that $\dt{V_t}=\dt{g(\alpha f(w_t))}$ and $\lambda_t^2\geq\lambda^2$ for any $t < T$ by \eqref{eqn:ntk-bound}, thus \eqref{eqn:evi} with constant $\rho>0$ implies
    \begin{align}
        \label{eqn:opt-gap-evol-const} \dt{V_t} &\leq -\frac{\lambda^2}{(1-\rho)\lambda^2+\rho}\|\nabla g(\alpha f(w_t))\|_2^2
    \end{align}
    since $z\mapsto \frac{z}{(1-\rho)z+\rho}$ is a monotonically increasing function for $\rho \geq 0$ and $\lambda_t^2\geq \lambda^2$. Since $f\mapsto g(f)$ is $\nu$-strongly convex, by Polyak-\L{}ojasiewicz (P\L) inequality \cite{bach2024learning}, we have 
    $$\|\nabla g(\alpha f(w_t))\|_2^2 \geq 2\nu V_t.$$ 
    Substituting this outcome of the P\L{}-inequality and \eqref{eqn:grad-norm-gn} into \eqref{eqn:evi-identity}, we obtain
    $$\dt{V_t}\leq -\frac{\lambda^2}{(1-\rho)\lambda^2+\rho}\cdot 2\nu V_t,\quad t\in[0,T).$$
    Thus, by Gr\"{o}nwall's lemma \citep{terrell2009stability}, we obtain
    \begin{align}\label{eqn:gap-const}
        V_t &\leq V_0\exp\left(-\frac{2\nu\lambda^2t}{(1-\rho)\lambda^2+\rho}\right) 
    \end{align}
    for any $t \in [0,T)$. Now, note that 
    \begin{equation}\label{eqn:forward-ineq}
    g(f^\star) \leq g(\alpha f(w_t)) - \frac{\nu}{2}\|\alpha f(w_t)-f^\star\|_2^2,
    \end{equation} since $f^\star \in \underset{x\in\bR^n}{\arg\min}~g(x)$ is the unique minimizer of the strongly convex $g:\bR^n\rightarrow\bR^+$, which implies that $\nabla g(f^\star) = 0$ by the first-order condition for optimality \citep{boyd2004convex}. Thus,
    \begin{equation}
        \|\alpha f(w_t)-f^\star\|_2^2 \leq \frac{2V_t}{\nu}
        \label{eqn:gap-ii}
    \end{equation}
    for any $t < T$. Substituting \eqref{eqn:gap-const} into \eqref{eqn:gap-ii} concludes the proof.
    \end{proof}

\begin{proof}[of Lemma \ref{lemma:non-degeneracy}] For a constant damping scheme with $\rho_t=\rho\in(0,\lambda^2/(1+\lambda^2)]$, by the triangle inequality, we have 
    \begin{align}
        \nonumber \|w_t-w_0\|_2 = \left\|\int_0^t\dot{w}_sds\right\|_2 &\leq \int_{0}^t\|\dot{w}_s\|_2\rr{d}s\\
        &=\frac{1}{\alpha}\int_{0}^t \|\nabla g(\alpha f(w_s))\|_{\rr{D}f(w_s)[\bm{H}_\rho(\alpha f(w_s)]^{-2}\rr{D}^\top f(w_s)}\rr{d}s.\label{eqn:deviation-bound}
    \end{align}
    Using the Sherman-Morrison-Woodbury matrix identity \cite{horn2012matrix}, we obtain
    \begin{multline}
        (1-\rho)^2\rr{D}f(w_t)[\bm{H}_\rho(\alpha f(w_t)]^{-2}\rr{D}^\top f(w_t) \\ = \rho_0^{-2}\bm{K}_t-2\rho_0^{-3}\bm{K}_t(\bm{I}+\rho_0^{-1}\bm{K}_t)^{-1}\bm{K}_t + \rho_0^{-4}\bm{K}_t(\bm{I}+\rho_0^{-1}\bm{K}_t)^{-1}\bm{K}_t(\bm{I}+\rho_0^{-1}\bm{K}_t)^{-1}\bm{K}_t,
    \end{multline}
    where $\rho_0:=\frac{\rho}{1-\rho}$. For any $z_1 \geq z_0 \geq \rho_0$, we have 
    \begin{equation}
        \frac{z_1}{((1-\rho)z_1+\rho)^2}\leq \frac{z_0}{((1-\rho)z_0+\rho)^2}.
        \label{eqn:max-eig}
    \end{equation} Then, if $(\gamma,u)\in\bR\times\bR^n$ is an eigenpair for $\bm{K}_t$, then $\Big(\frac{\gamma}{((1-\rho)\gamma+\rho)^2}, u\Big)$ is an eigenpair for the positive-definite matrix $\rr{D}f(w_t)[\bm{H}_\rho(\alpha f(w_t))]^{-2}\rr{D}^\top f(w_t)$. Furthermore, by \eqref{eqn:max-eig} and \eqref{eqn:ntk-bound}, the maximum eigenvalue of $\rr{D}f(w_t)[\bm{H}_\rho(\alpha f(w_t))]^{-2}\rr{D}^\top f(w_t)$ is upper bounded by $\frac{\lambda^2}{\left((1-\rho)\lambda^2+\rho\right)^2}$. Thus, we have
    \begin{align}
    \begin{aligned}
        \|\nabla g(\alpha f(w_s))\|_{\rr{D}f(w_s)[\bm{H}_\rho(\alpha f(w_s)]^{-2}\rr{D}^\top f(w_s)}^2 \leq \frac{\lambda^2}{((1-\rho)\lambda^2+\rho)^2}\|\nabla g(\alpha f(w_s))\|_2^2,
        \end{aligned}
        \label{eqn:op-norm-bound}
    \end{align}
    for $s \leq t < T$, which is implied by $\rho \leq \frac{\lambda^2}{1+\lambda^2}$. Since $s \leq t < T$, we have
    \begin{align}
        \nonumber \|\nabla g(\alpha(f(w_s))\|_2 &= \|\nabla g(\alpha(f(w_s))-\nabla g(f^\star)\|_2\\
        \label{eqn:gap-grad-bound}&\leq \mu\|\alpha f(w_s)-f^\star\|\\
        \nonumber &\leq \mu\sqrt{\frac{2V_0}{\nu}}\exp\Big(-\frac{\nu\lambda^2s}{\rho+(1-\rho)\lambda^2}\Big),
    \end{align}
    where the first line follows from the global optimality of $f^\star$, the second line is due to $\mu$-Lipschitz-continuous gradients of $g$, and the last line follows from the bound on the optimality gap in \eqref{eqn:gap-ii}. Thus,
    \begin{align*}
        \|\nabla g(\alpha f(w_s))\|_{\rr{D}f(w_s)[\bm{H}_\rho(\alpha f(w_s)]^{-2}\rr{D}^\top f(w_s)} &\leq \frac{\mu\sqrt{\frac{2V_0}{\nu}}\lambda}{(1-\rho)\lambda^2+\rho}\exp\Big(-\frac{\nu\lambda^2s}{\rho+(1-\rho)\lambda^2}\Big)\mbox{ for any }s<T.
    \end{align*}
    Substituting the above inequality into \eqref{eqn:deviation-bound}, we obtain
    \begin{align*}
        \|w_t-w_0\|_2 &\leq \frac{1}{\alpha}\cdot \frac{\mu\sqrt{2V_0}}{\nu^{3/2}}\cdot \frac{1}{\lambda}\leq r_0 := \frac{\lambda}{L}
    \end{align*}
    with the choice of $\alpha \geq \frac{\mu L\sqrt{2V_0}}{\nu^{3/2}}\cdot \frac{1}{\lambda^2}$, where $L$ is given in \eqref{eqn:L}. Therefore, $\sup_{t<T}\|w_t-w_0\|_2\leq r_0$, where $r_0$ is independent of $T$. We conclude that $T=\infty$.
\end{proof}

\begin{proof}[of Theorem \ref{thm:lm-convergence-dt}] The proof heavily relies on the continuous time analysis of Theorem \ref{thm:lm-convergence} and the discretization idea in \cite{du2018gradient}. For notational simplicity, let $\bm{D}_k:= \rr{D}f(w_k)$, $\bm{H}_k := \bm{H}_\rho(f(w_k))$ for $k\in\bN$.

Recall that $w\mapsto \rr{D}f(w)$ is $L$-Lipschitz with $$L=\frac{\sigma_2}{\sqrt{m}}\sqrt{\sum_{j=1}^n\|x_j\|^4}\leq \sigma_2\sqrt{\frac{n}{m}},$$ under the assumption that $\max_{1\leq j\leq n}\|x_j\|\leq 1$. Let $N:=\inf\{k\in\bN:\|w_k-w_0\|_2>\frac{\lambda_0}{L}\}$ and consider $k<N$. Since $g$ has $\mu$-Lipschitz gradients, we have
    \begin{equation}
        g(f(w_{k+1}))\leq g(f(w_{k})) + \nabla ^\top g(f(w_{k}))[f(w_{k+1})-f(w_k)] + \frac{\mu}{2}\|f(w_{k+1})-f(w_k)\|_2^2.
        \label{eqn:lyapunov}
    \end{equation}
    Since $w\mapsto f(w)$ has $L$-Lipschitz gradients, we have $$f(w_{k+1})-f(w_k)=\bm{D}_k(w_{k+1}-w_k)+\bm{\epsilon}_k,$$ where the local linearization error is bounded as $$\|\bm{\epsilon}_k\|_2\leq \frac{L}{2}\|w_{k+1}-w_k\|_2^2.$$ Define
    \begin{equation}
        h_i(z) = \frac{z^2}{\Big((1-\rho)z^2 + \rho\Big)^i}~\mbox{for}~i=1,2.
    \end{equation}
    Then, using \eqref{eqn:op-norm-bound}, we have
    \begin{align}
        \nonumber \|w_{k+1}-w_k\|^2 &= \eta^2 \nabla^\top g(f(w_k)) \bm{D}_k\bm{H}_k^{-2}\bm{D}_k^\top \nabla g(f(w_k))\\
        &\leq \eta^2 h_2(\lambda) \|\nabla g(f(w_k))\|^2\\
        &\leq 2\eta^2 h_2(\lambda) \frac{\mu^2}{\nu}V_k,
    \end{align}
    since $f\mapsto \nabla g(f)$ is $\mu$-Lipschitz and $\|f(w_k)-f^\star\|^2\leq \frac{2V_k}{\nu}$ by \eqref{eqn:gap-ii}. Therefore, $$\|\bm{\epsilon}_k\| \leq \frac{1}{2}\eta^2 Lh_2(\lambda)\|\nabla g(f(w_k))\|^2  \leq \eta^2 h_2(\lambda)\frac{\mu^2}{\nu}LV_k,$$ and
    \begin{align}
        \nonumber \|f(w_{k+1})-f(w_k)\|^2 &\leq 2 \|\bm{D}_k(w_{k+1}-w_k)\|^2 + 2\|\bm{\epsilon}_k\|^2\\
        &\leq 2\eta^2\|\bm{D}_k\bm{H}_k^{-1}\bm{D}_k^\top \nabla g(f(w_k))\|^2 + \eta^4 L^2V_k h_2^2(\lambda)\frac{\mu^2}{\nu}\|\nabla g(f(w_k))\|^2.
    \end{align}
    Since $\|w_k-w_0\| \leq \frac{\lambda}{L}$, we have $\bm{D}_k\bm{D}_k^\top \preccurlyeq \rr{Lip}_f^2\bm{I}$, which implies that $\bm{D}_k\bm{H}_k^{-1}\bm{D}_k^\top \preccurlyeq h_1(\rr{Lip}_f)$. Thus,
    \begin{equation}
        \|f(w_{k+1})-f(w_k)\|^2 \leq \Big(2\eta^2h_1^2(\rr{Lip}_f)+\eta^4L^2V_kh_2^2(\lambda)\frac{\mu^2}{\nu}\Big)\|\nabla g(f(w_k))\|^2.
        \label{eqn:disc-quadratic}
    \end{equation}
    On the other hand,
    \begin{equation*}
        \nabla ^\top g(f(w_k))\Big(f(w_{k+1})-f(w_k)\Big) = \gk\Big(\bm{D}_k(w_{k+1}-w_k) + \bm{\epsilon}_k\Big).
    \end{equation*}
    Note that
    \begin{align*}
        \nabla^\top g(f(w_k)) \bm{D}_k(w_{k+1}-w_k) &= -\eta \nabla^\top g(f(w_k))\bm{D}_k\bm{H}_k^{-1}\bm{D}_k^\top \nabla g(f(w_k))\\
        &\leq -\eta h_1(\lambda)\|\nabla g(f(w_k))\|^2,
    \end{align*}
    and
    \begin{align*}
        \nabla ^\top g(f(w_k))\bm{\epsilon}_k &\leq \|\nabla g(f(w_k))\|\cdot \|\bm{\epsilon}_k\|\\
        &\leq \eta^2\frac{\mu}{\sqrt{\nu}} L\sqrt{V_k} h_2(\lambda)\|\nabla g(f(w_k))\|_2^2.
    \end{align*}
    Therefore, we have
    \begin{equation}
        \nabla ^\top g(f(w_k))\Big(f(w_{k+1})-f(w_k)\Big) \leq \Big(-\eta h_1(\lambda)+\eta^2\frac{\mu}{\sqrt{\nu}} L\sqrt{V_k} h_2(\lambda)\Big)\|\nabla g(f(w_k))\|^2.
        \label{eqn:disc-linear}
    \end{equation}
    Substituting \eqref{eqn:disc-quadratic} and \eqref{eqn:disc-linear} into \eqref{eqn:lyapunov}, we obtain
    \begin{equation*}
        V_{k+1} \leq V_k + \Big(-\eta h_1(\lambda)+\eta^2\frac{\mu}{\sqrt{\nu}} L\sqrt{V_k} h_2(\lambda)+\mu\eta^2h_1^2(\rr{Lip}_f)+\eta^4L^2V_kh_2^2(\lambda)\frac{\mu^3}{\nu}\Big)\|\nabla g(f(w_k))\|^2.
    \end{equation*}
    Choose $$\eta \leq \frac{h_1(\lambda)}{6\mu h_1^2(\rr{Lip}_f)},$$ and the network width $m$ sufficiently large such that $L$ satisfies 
    \begin{equation}
        L\leq \sqrt{\frac{\nu}{V_0}}\min\left\{\frac{h_1(\rr{Lip}_f)}{h_2(\lambda)\mu\eta},\frac{h_1^2(\rr{Lip}_f)}{h_2(\lambda)}\right\}.
    \end{equation}
    Then, we have $$V_{k+1}\leq V_k-\frac{\eta h_1(\lambda)}{2}\|\nabla g(f(w_k))\|^2$$ and $L\sqrt{V_k}\leq L\sqrt{V_0}$ for all $k\in\bN$ by induction.
    From Polyak-\L{}ojasiewicz inequality, we have $\|\nabla g(f(w_k))\|^2 \geq 2\nu V_k$. Using this, we obtain
    \begin{equation}
        V_{k+1} \leq \Big(1-\eta \nu h_1(\lambda)\Big)V_k,
    \end{equation}
    for any $k < N$. Hence, for any $k \leq N$,
    \begin{equation}
        V_k \leq V_0 \Big(1-\eta\nu h_1(\lambda)\Big)^k \mbox{ and }\|f(w_k)-f^\star\|^2\leq \frac{2V_0}{\nu}\Big(1-\eta\nu h_1(\lambda)\Big)^k.
    \end{equation}
Using these inequalities, we will now show that $N = \infty$ can be established by sufficiently large $m\in\bN$. First, recall that $\|w_{k+1}-w_k\|^2 \leq \eta^2 h_2(\lambda)\|\nabla g(f(w_k))\|^2.$ Then, for $k < N$,
    \begin{align*}
        \|w_k-w_0\|_2 &\leq \sum_{s<k}\|w_{s+1}-w_s\|_2\leq \eta \sqrt{h_2(\lambda)}\sum_{s<k}\|\nabla g( f(w_s))\|_2\\
        &\leq \eta\sqrt{h_2(\lambda)}\mu\sum_{s<k}\| f(w_s)-f^\star\|_2\leq \eta\sqrt{h_2(\lambda)}\mu\sum_{s<k}\sqrt{\frac{2V_0}{\nu}}q^{s/2}\\
        &\leq \eta\sqrt{h_2(\lambda)}\mu\sqrt{\frac{2V_0}{\nu}}\frac{1}{1-\sqrt{q}}\leq 2\eta\sqrt{h_2(\lambda)}\mu\sqrt{\frac{2V_0}{\nu}}\frac{1}{1-q}\\
        &= \frac{2\sqrt{2 h_2(\lambda)}}{\nu h_1(\lambda)}\sqrt{\frac{V_0}{\nu}},
    \end{align*}
    where $q := 1-\eta\nu h_1(\lambda)$. We choose $m$ sufficiently large such that $$\frac{2\sqrt{2 h_2(\lambda)}}{\nu h_1(\lambda)}\sqrt{\frac{V_0}{\nu}} \leq \frac{\lambda}{L}.$$
    Hence, we can ensure from the above inequality that $\|w_k-w_0\|_2\leq r_0$ for all $k\in\bN$, therefore $N=\infty$. Therefore, $V_k \leq V_0 \Big(1-\eta \nu h_1(\lambda)\Big)^k$ holds for any $k\in\bN$, where we choose $m$ such that
    \begin{equation}
        L\leq \sqrt{\frac{\nu}{V_0}}\min\left\{\frac{h_1(\rr{Lip}_f)}{h_2(\lambda)\mu\eta},\frac{h_1^2(\rr{Lip}_f)}{h_2(\lambda)},\frac{\lambda\nu h_1(\lambda)}{2\sqrt{2h_2(\lambda)}}\right\}.
    \end{equation}
    Choosing $\eta = \frac{h_1(\lambda)}{6\mu h_1^2(\rr{Lip}_f)}$ yields the convergence rate
       $ V_k\leq V_0\left(1-\frac{1}{6\kappa}\frac{h_1^2(\lambda)}{h_1^2(\rr{Lip}_f)}\right)^k.$
\end{proof}

\begin{proof}[of Theorem \ref{thm:non-quad-out}]

(1) Using similar steps as Theorem \ref{thm:lm-convergence} with Sherman-Morrison-Woodbury formula, the evolution in the function space can be written as
\begin{equation*}
    \dt{f(w_t)} = -\frac{1}{\rho_t}\bm{\Sigma}_t(\bm{K}_t)\nabla g(f(w_t)),
\end{equation*}
where $\bm{\Sigma}_t(\bm{K}) := \bm{K}-\bar{\rho}_t^{-1}\bm{K}\Big(\bm{G}^{-1}(w_t)+\bar{\rho}_t^{-1}\bm{K}\Big)^{-1}\bm{K}$ for any symmetric positive semi-definite matrix $\bm{K}\in\bR^{n\times n}$. The transform $\bm{K}\mapsto \bm{\Sigma}_t(\bm{K})$ is monotonically increasing with respect to the L\"{o}wner order. Since $t < T$, we have $\bm{K}_t\succeq \lambda^2\bm{I}$, therefore $\bm{\Sigma}_t(\bm{K}_t)\succeq \bm{\Sigma}_t(\lambda^2 \bm{I})$. Denote the eigenpairs of $\bm{G}(w_t)$ as $(u_{t,i},\gamma_{t,i})$ where $\gamma_{t,i}\leq \gamma_{t,i+1}$ for all $i=1,2,\ldots,n-1$. Then, for any $i\in[n]$, $(u_i,\tilde{\gamma}_{t,i})$ is an eigenpair for $\frac{1}{\rho}\bm{\Sigma}_t(\lambda^2\bm{I})$ where $$\tilde{\gamma}_{t,i}=\frac{\lambda^2}{\rho_t+(1-\rho_t)\lambda^2\gamma_{t,i}}.$$ Using this, since $\gamma_{t,n}:=\lambda_{\max}(\bm{G}(w_t))$, we conclude that 
\begin{align*}
    \dt{g(f(w_t))} &\leq -\frac{\lambda^2}{\rho_t + (1-\rho_t)\lambda^2\gamma_{t,n}}\|\nabla g(f(w_t))\|^2\\
    &= -\frac{1}{2}\Big(\lambda^2 + \frac{1}{\gamma_{t,n}}\Big)\|\nabla g(f(w_t))\|^2,\\
    &\leq -\nu(\lambda^2 + \gamma_{t,n}^{-1})(g(f(w_t))-g(f^\star)) =: -\nu(\lambda^2 + \gamma_{t,n}^{-1})V_t,
\end{align*}
where the second line follows from the particular choice of $\rho_t$. Using Gr\"{o}nwall inequality, we obtain
\begin{align*}
    V_t &\leq V_0\exp\left(-\lambda^2\nu t - \nu\int_0^t\frac{1}{\gamma_{s,n}}\rr{d}s\right),\\
    \|f(w_t)-f^\star\| &\leq \sqrt{\frac{2V_0}{\nu}}\exp(-\frac{1}{2}\lambda^2\nu t - \frac{1}{2}\frac{\nu}{\mu}t),
\end{align*}
    for any $t < T.$ Using $\|w_t-w_0\|\leq \int_0^t\|\dot{w}_s\|\rr{d}s$ similar to Theorem \ref{thm:lm-convergence}, we conclude that $$\int_{0}^\infty\|w_s\|\rr{d}s\leq Lc^\star,$$ for some constant $c^\star =c^\star(\cD, \mu,\nu)$. Thus, if $m$ is sufficiently large, then $$Lc^\star=\frac{\sigma_2\sqrt{\sum_j\|x_j\|^4}}{\sqrt{m}}c^\star\leq r_0,$$ which implies that $T=\infty$. Note that $\int_0^\infty\|\dot{w}_s\|\rr{d}s<\infty$ implies that $\{w_t:t\geq 0\}$ is a Cauchy system in $\bR^p$, thus $w_t\rightarrow w^\star$ as $t\rightarrow\infty$ for some $w^\star\in\cB(w_0,r_0)$. Since $f(w_t)\underset{t\rightarrow\infty}{\rightarrow} f^\star$, the limit point should be an interpolant such that $f(w^\star)=f^\star$.

(2) We first make the following decomposition:
\begin{align}
    \begin{aligned}
    \bm{H}_\rho(w)-\nabla^2\cR(w^\star)&=(1-\rho(w))\Big[\underbrace{\rr{D}^\top f(w)\bm{G}(f(w))\rr{D}f(w) - \nabla^2 \cR(w)}_{(a)}\Big] \\&+ (1-\rho(w))\Big[\underbrace{\nabla^2 \cR(w)-\nabla^2 \cR(w^\star)}_{(b)}\Big]\\
    &+ \rho(w)\Big[\bm{I}-\nabla^2 \cR(w^\star)\Big].
    \end{aligned}
\end{align}
For $(a)$, we have the following identity:
\begin{equation*}
    \nabla^2\cR(w)-\rr{D}^\top f(w)\bm{G}(f(w))\rr{D}f(w) = \sum_{j=1}^n[\nabla g(f(w))]_j\nabla^2\varphi(x_j;w).
\end{equation*}
Since $\|\nabla^2\varphi(x_j;w)\|\leq \sigma_2\|x_j\|^2/\sqrt{m}$ and thus $w\mapsto \rr{D}f(w)$ is $L$-Lipschitz with $L$ defined in \eqref{eqn:L}, we obtain
\begin{equation}
    \|\nabla^2\cR(w)-\rr{D}^\top f(w)\bm{G}(f(w))\rr{D}f(w)\| \leq \mu\cdot L \cdot \rr{Lip}_f\cdot\|w-w^\star\|.
\end{equation}
For $(b)$, $C$-Lipschitz continuity of $\nabla^2\cR(w)$ implies
$$
\|\nabla^2\cR(w)-\nabla^2\cR(w^\star)\|\leq C\|w-w^\star\|.
$$
Also,
    $g(f(w))\leq \frac{\mu}{2}\|f(w)-f(w^\star)\|^2\leq \frac{\mu}{2}\rr{Lip}_f^2\|w-w^\star\|^2,$
hence $\sqrt{\cR(w)}\leq \rr{Lip}_f\sqrt{\mu/2}\|w_t-w^\star\|,$ and $$\frac{\sqrt{\cR(w)}}{{\rr{Lip}_f\sqrt{\mu/2}}+\sqrt{\cR(w)}}\leq \frac{\|w-w^\star\|}{1+\|w-w^\star\|}\leq \|w-w^\star\|.$$ Using these, we obtain the following bound:
\begin{equation}
    \|\bm{H}_\rho(w)-\nabla^2\cR(w^\star)\|\leq \Big(\mu \cdot L\cdot \rr{Lip}_f + C + 1\Big)\|w-w^\star\| =: C'\|w-w^\star\|.
\end{equation}
By Weyl's inequality \citep{horn2012matrix}, we conclude that
\begin{equation}\label{eqn:r-star-1}
    \|w-w^\star\|\leq \frac{1}{2C'}\lambda_{\min}(\nabla^2\cR(w^\star)) \Longrightarrow \lambda_{\min}(\bm{H}_\rho(w)) \geq \frac{1}{2}\lambda_{\min}(\nabla^2\cR(w^\star))\geq \frac{1}{2}\nu.
\end{equation}
For any $w\in\bR^p$, let \begin{align*}e(w):=w-w^\star,\quad \mbox{and}\quad R_1(w):= \nabla\cR(w)-\nabla^2\cR(w^\star)e(w).\end{align*} 
Using $C$-Lipschitz continuity of $\nabla^2\cR(w)$, $$\|R_1(w)\|\leq \frac{1}{2}C\|e(w)\|^2.$$ Then, letting $e_t:=e(w_t)$, we obtain
\begin{align*}
    \dot{e}_t = \dot{w}_t &= -\bm{H}_\rho^{-1}(w_t)\nabla_w\cR(w_t)\\
    &= -\bm{H}_\rho^{-1}(w_t)\nabla^2\cR(w^\star)e_t - \bm{H}_\rho^{-1}(w_t)R_1(w_t)\\
    &= -e_t - \Big([\nabla^2\cR(w^\star)]^{-1}-\bm{H}_\rho^{-1}(w_t)\Big)\nabla^2\cR(w^\star)e_t - \bm{H}_\rho^{-1}(w_t)R_1(w_t).
\end{align*}
We use the Lyapunov function $\Psi(w):=\frac{1}{2}\|e(w)\|_2^2$. Then, we have
\begin{align*}
    &\dt{\Psi(w_t)}=e_t^\top \dot{e}_t\\ &\leq  -\|e_t\|^2 + \|[\nabla^2\cR(w^\star)]^{-1}-\bm{H}_\rho^{-1}(w_t)\|\cdot\lambda_{\max}(\nabla^2\cR(w^\star))\|e_t\|^2 + \frac{C}{\lambda_{\min}(\nabla^2\cR(w^\star))}\|e_t\|^3.
\end{align*}
Using $\|A^{-1}-B^{-1}\|\leq \frac{\|A-B\|}{\lambda_{\min}(A)\lambda_{\min}(B)}$, if $\|w_t-w^\star\|\leq \frac{1}{2C'}\lambda_{\min}(\nabla^2(\cR(w^\star))$, we obtain  $$\|[\nabla^2\cR(w^\star)]^{-1}-\bm{H}_\rho^{-1}(w_t)\|\leq \frac{2C'}{\lambda_{\min}^2(\nabla^2\cR(w^\star))}\|e_t\|.$$ Hence, with $C'' := \frac{2C'\lambda_{\max}(\nabla^2\cR(w^\star))}{\lambda^2_{\min}(\nabla^2\cR(w^\star))} + \frac{C}{\lambda_{\min}(\nabla^2\cR(w^\star))},$ we have
\begin{align*}
    \dt{\Psi(w_t)} \leq -2\Psi(w_t) + C''[\Psi(w_t)]^{3/2}.
\end{align*}
If 
\begin{equation}\label{eqn:r-star-2}\|w_t-w^\star\|\leq \min\left\{\frac{\sqrt{2}}{C''}, \frac{1}{2C'}\lambda_{\min}(\nabla^2\cR(w^\star))\right\}=:r^\star,\end{equation}
then $C''[\Psi(w_t)]^{1/2}\leq 1,$ and thus $$\dt{\Psi(w_t)}\leq -\Psi(w_t),\quad t\geq \tau_{r^\star}.$$ We conclude the proof by using Grönwall's lemma.
\end{proof}

\begin{proof}[of Proposition \ref{prop:implicit-bias}]
    Under Assumption \ref{assumption:ntk}, we have $\rr{D}f(w_0)\bm{H}_\rho^{-1}(w_0)\rr{D}^\top f(w_0) \succ 0$, thus $\lim_{t\rightarrow\infty}\bar{f}_0(u_t) = y$ using the same Grönwall argument as Theorem \ref{thm:lm-convergence}. Let $$\beta_t = -\int_0^t(y-\bar{f}_0(u_s))\rr{d}s,\quad t \geq 0.$$ Then, 
    $$
        u_t-w_0 = \bm{H}_\rho^{-1}(w_0)\rr{D}^\top f(w_0)\beta_t,\quad t \geq 0.
    $$
    Since $\bar f_0(u_t)\rightarrow y$ as $t\rightarrow\infty$, $$\lim_{t\rightarrow\infty}\rr{D}f(w_0)\bm{H}_\rho^{-1}(w_0)\rr{D}^\top f(w_0)\beta_t = y\quad\Rightarrow\quad \lim_{t\rightarrow\infty}\beta_t = (\rr{D}f(w_0)\bm{H}_\rho^{-1}(w_0)\rr{D}^\top f(w_0))^{-1}y.$$ Therefore, $$\lim_{t\rightarrow\infty}\{u_t - w_0\} = \bm{H}_\rho^{-1}(w_0)\rr{D}^\top f(w_0) \Big(\rr{D}f(w_0)\bm{H}_\rho^{-1}(w_0)\rr{D}^\top f(w_0)\Big)^{-1}y.$$
    By Karush-Kuhn-Tucker (KKT) conditions \citep{bach2024learning, boyd2004convex}, the limit yields $u^\star-w_0$ with minimum $\bm{H}_\rho(w_0)$-norm subject to the affine constraints $\rr{D}f(w_0)(u-w_0)=y$.
\end{proof}

\begin{proof}[of Lemma \ref{lemma:par-dev-deep}]
    The proof extends Lemma B.4 in \cite{du2019gradient} with an improved dependence on $n$ and $\delta$ for smooth and bounded activations. Let $\mathcal{F}_h=\sigma(W_0^{(1)},\ldots,W_0^{(h)})$ for any $h\in[H]$. For notational simplicity, we denote $\bm{x}_j^{(h)}(\bm{W}_0)$ as $\bm{x}_j^{(h)}$ throughout the proof. For any data point $j\in[n]$, $$\|\bm{x}_j^{(h)}\|_2^2=\frac{a_\sigma}{m}\sum_{i=1}^m\sigma^2\big(\langle W_{0,i}^{(h)},\bm{x}_j^{(h-1)}\rangle\big),$$
    where $W_{0,i}^{(h)}$ is the $i$-th row of $W_0^{(h)}$. Then, $\bm{x}_j^{(h)}(\bm{W}_0)$ is $\mathcal F_h$-measurable, each summand in the above display is bounded by $a_\sigma \sigma_0^2$ almost surely, and $$\mathbb{E}[\|\bm{x}_j^{(h)}\|_2^2|\mathcal{F}_{h-1}]=a_\sigma\bE[\sigma^2(\langle W_{0,1}^{(h)},\bm{x}_j^{(h-1)}\rangle)|\mathcal{F}_{h-1}].$$ Thus, by Hoeffding's inequality, conditioned on $\mathcal F_{h-1}$, we have
    \begin{equation}\label{eqn:conc-ineq}
        \max_{j=1,2,\ldots,n}\Big|\|\bm{x}_j^{(h)}\|_2^2 - a_\sigma \bE\big[\sigma^2\big(\big\langle W_{0,1}^{(h)},\bm{x}_j^{(h-1)}\big\rangle\big)\big|\mathcal F_{h-1}\big]\Big| \leq a_\sigma\sigma_0^2\sqrt{\frac{\log(2n/\delta)}{2m}},
    \end{equation}
    with probability at least $1-\delta$. Since $\bE_{u_0\sim\cN(0,\bm{I})}[\sigma^2(u_0^\top x)]=\bE_{z\sim\cN(0,1)}[\sigma^2(z\|x\|_2)]$, we have
    \begin{equation}
            \bE\big[\sigma^2\big(\big\langle W_{0,1}^{(h)},\bm{x}_j^{(h-1)}\big\rangle\big)\big|\mathcal F_{h-1}\big] = \bE_{z\sim\cN(0,1)}\big[\sigma^2\big(z\cdot \|\bm{x}_j^{(h-1)}\|_2\big)\big|\mathcal F_{h-1}\big].
            \label{eqn:deep-i}
    \end{equation}
    Using $\sigma_0$-boundedness and $\sigma_1$-Lipschitz continuity of $\sigma$, for any $x\in\bR^m$,
    $$
        \Big|\bE[\sigma^2(z)] - \bE[\sigma^2(z\cdot\|x\|_2)]\Big| \leq 2\sigma_0\sigma_1\Big|1-\|x\|_2\Big|.
    $$
    Hence, dividing both sides by $a_\sigma$, we obtain
    \begin{equation}\label{eqn:deep-ii}
        \Big|1 - a_\sigma\bE[\sigma^2(z\cdot\|\bm{x}_j^{(h-1)}\|_2)]\Big| \leq C\Big|1-\|\bm{x}_j^{(h-1)}\|_2\Big|.
    \end{equation}
    Using \eqref{eqn:conc-ineq}, \eqref{eqn:deep-i} and \eqref{eqn:deep-ii}, we obtain
    $$
        \Big|1-\|\bm{x}_j^{(h)}\|_2\Big|\leq \Big|1-\|\bm{x}_j^{(h)}\|_2^2\Big| \leq C\Big|1-\|\bm{x}_j^{(h-1)}\|_2\Big| + a_\sigma\sigma_0^2\sqrt{\frac{\log(2n/\delta)}{2m}}.
    $$
    With the choice $$m \geq \frac{a_\sigma^2\sigma_0^4C^2\log(2nH/\delta)}{2k^2},$$ the last term above is bounded by $\frac{k}{C}$ and we obtain the recursion
    $$
        \Big|1-\|\bm{x}_j^{(h)}\|_2\Big| \leq C\Big|1-\|\bm{x}_j^{(h-1)}\|_2\Big|+\frac{k}{C},
    $$
    for all $j\in[n]$ and $h\in[H]$ with probability at least $1-\delta$. With the choice $k=\frac{C(C-1)}{2(C^H-1)}$, this implies
    \begin{equation}
        \frac{1}{2} \leq  \|\bm{x}_j^{(h)}\|_2 \leq \frac{3}{2},\quad \forall j\in[n],\forall h\in[H],
    \end{equation}
    with probability at least $1-\delta$ at initialization. The bound on $\max_{h\in[H]}\|W_0^{(h)}\|_2$ follows from sub-Gaussian random matrix inequality (Theorem 4.4.3) in \cite{vershynin2018high}. Since $\|c_0\|_2 \leq \sqrt{m}$ and $\|c_0\|_4 \leq m^{1/4}$ almost surely at initialization, the result follows from substituting these inequalities into Lemma B.4 in \cite{du2019gradient}.
\end{proof}

    \section{Omitted Proofs from Section \ref{sec:up}}\label{app:up}
The following lemma, which characterizes the magnitude of the second fundamental form and quantifies curvature, will be fundamental throughout the analysis.

\begin{lemma}[Curvature bounds]\label{lemma:curvature}
    Take an arbitrary $\alpha f(w)\in\cM$ and $u\in\cT_{\alpha f(w)}\cM\backslash\{0\}$. Let $\beta:[0,1]\rightarrow\cM$ be any smooth curve such that $\beta_0=\alpha f(w)$ and $\dt{\beta_t}\big|_{t=0}=u$. Since $\alpha f$ is a smooth injective immersion, we have a smooth curve $\gamma:[0,1]\rightarrow B$ such that $\beta_t:=\alpha f(\gamma_t)\in\cM$ with $\gamma_0=w$ and $\dt{\alpha f(\gamma_t)}\big|_{t=0}=u$. Then, we have
    \begin{equation}
        \Big\|\dt{}\Big[\bm{P}(\alpha f(\gamma_t))\Big]\Big|_{t=0}\Big\| \leq \frac{2L\|u\|_2}{\alpha \lambda_0^2}.
    \end{equation}
\end{lemma}
    \begin{proof}[of Lemma \ref{lemma:curvature}]
        In the following, we will establish an upper bound on $\big\|\big[\dt{}\bm{P}(\alpha f(\gamma_t))\big]\big|_{t=0}\|$. The first inequality follows from a classical result in perturbation theory.
    \begin{claim}[Theorem 3.9 in \cite{stewart1990matrix}]
        Let $J_i\in\bR^{n\times p},~i=1,2,$ be two matrices such that $$\min\{\lambda_{\min}(J_1^\top J_1), \lambda_{\min}(J_2^\top J_2)\}\geq \lambda^2.$$ Let $P_i=J_i[J_i^\top J_i]^{-1}J_i^\top,~i=1,2$. Then, we have 
        \begin{equation}
            \|P_1-P_2\| \leq \frac{2}{\lambda}\|J_1-J_2\|.
        \end{equation}
        \label{claim:perturbation}
    \end{claim}
    \noindent For any $t\in[0,1]$, we have $\gamma_t\in B$, thus $\lambda_{\min}\big(\rr{D}^\top f(\gamma_t)\rr{D}f(\gamma_t)\big)\geq \lambda_0^2$ by Lemma \ref{lemma:min-eig}. Thus, for any $t\in[0,1]$, we have
    \begin{equation*}
        \|\bm{P}(\alpha f(\gamma_t))-\bm{P}(\alpha f(\gamma_0))\|\leq \frac{2}{\lambda_0}\|\rr{D}f(\gamma_t)-\rr{D}f(\gamma_0)\|.
    \end{equation*}
    Recall that $w\mapsto \rr{D}f(w)$ is globally $L$-Lipschitz where $L$ is explicitly given in \eqref{eqn:L}. Therefore,
    \begin{equation}\label{eqn:hessian-2a}
        \|\bm{P}(\alpha f(\gamma_t))-\bm{P}(\alpha f(\gamma_0))\|\leq \frac{2L}{\lambda_0}\|\gamma_t-\gamma_0\|.
    \end{equation}
    By Lemma \ref{lemma:injection}, $\alpha f|_B:B\mapsto \cM$ is a bijective mapping, hence its inverse $f_\alpha^{-1}:\cM\rightarrow B$ such that $f_\alpha^{-1}(\alpha f(w))=w,~w\in B$ exists. Furthermore, $f_\alpha:\cM\rightarrow B$ is Lipschitz continuous with 
        \begin{equation}
            \sup_{y\in \cM}\|\rr{D}f_\alpha^{-1}(z)\| \leq \frac{1}{\alpha}\cdot\frac{1}{\lambda_0},
        \end{equation}
        since $\rr{D}f_\alpha^{-1}(z)=[\rr{D}\alpha f(f_\alpha^{-1}(z))]^+$ for any $z\in\cM$ and $\lambda_{\min}(\rr{D}^\top f(w)\rr{D}f(w))\geq \lambda_0$ for all $w\in B$.
        Hence, we have
    $$
    \|\gamma_t-\gamma_0\|=\|f_\alpha^{-1}(\alpha f(\gamma_t))-f_{\alpha}^{-1}(\alpha f(\gamma_0))\|\leq \frac{1}{\alpha\lambda_0}\|\alpha f(\gamma_t)-\alpha f(\gamma_0)\|.
    $$
    Substituting this into \eqref{eqn:hessian-2a}, we obtain
    $$
    \|\bm{P}(\alpha f(\gamma_t))-\bm{P}(\alpha f(\gamma_0)\|\leq \frac{2L}{\alpha \lambda_0^2}\|\alpha f(\gamma_t)-\alpha f(\gamma_0)\|.
    $$
    Therefore, we have
    \begin{equation}\label{eqn:hessian-2b}
        \Big\|\dt{}\Big[\bm{P}(\alpha f(\gamma_t))\Big]\Big|_{t=0}\Big\| \leq \frac{2L}{\alpha \lambda_0^2}\cdot\Big\|\dt{\alpha f(\gamma_t)}\Big|_{t=0}\Big\|_2=\frac{2L\|u\|_2}{\alpha \lambda_0^2}
    \end{equation}
    since $\dt{\alpha f(\gamma_t)}\big|_{t=0}=u$.
    \end{proof}

\begin{proof}[of Lemma \ref{lemma:g-convexity-M}]
    Take any $\alpha f(w)\in\cM$ and $u\in\cT_{\alpha f(w)}\cM\backslash\{0\}$, and let
    \begin{equation}
        \mathscr{L}(\alpha f(w),u):= \Big\|\dt{}\Big[\bm{P}(\alpha f(\gamma_t))\Big]\Big|_{t=0}\Big\|
    \end{equation}
    be the norm of the differential in the direction of $u$. We have $L(\alpha f(w), u) = \|II_{\alpha f(w)}(u, \cdot)\|_\rr{op}$. Let 
    \begin{equation*}
        \Lambda := \sup_{w\in B}~\sup_{u\in \rr{Im}(\rr{D}f(w)):\|u\|=1}~\mathscr{L}(\alpha f(w), u).
    \end{equation*}
    By the Alexander-Berg-Bishop Characterization Theorem \citep{alexander1993geometric}, if
    \begin{equation}\label{eqn:cat-condition}
        \sup_{w,w'\in B}\|\alpha f(w)-\alpha f(w')\| \leq {\pi/\Lambda},
    \end{equation}
    then $(\cM,\langle \cdot,\cdot\rangle^\cM)$ is a $\rr{CAT}(\Lambda^2)$ space; hence there exists a geodesic segment $c:[0,1]\rightarrow \cM$ such that $c(0)=\alpha f(w)$ and $c(1)=\alpha f(w')$ by \cite{BridsonHaefliger1999}, Proposition 1.4 in Part II, implying the geodesic convexity of $\cM$.
    
    By Lemma \ref{lemma:curvature}, we have $\Lambda \leq \frac{2L}{\alpha \lambda_0^2}$ for each $w,w'\in B$. Furthermore, $$\sup_{w,w'\in B}\|\alpha f(w)-\alpha f(w')\|\leq 2\alpha r_0 \rr{Lip}_f.$$ Hence, a sufficient condition for \eqref{eqn:cat-condition} is \begin{equation}\label{eqn:suff-condition-cat}r_0 \leq \frac{\lambda_0^2\pi}{4L\rr{Lip}_f}\end{equation} due to $$\sup_{w,w'\in B}\|\alpha f(w)-\alpha f(w')\| \leq 2\alpha r_0 \rr{Lip}_f ~\mbox{  and  }~ \frac{\alpha \pi\lambda_0^2}{2L}\leq \frac{\pi}{\Lambda}.$$ Since $r_0\leq \frac{\lambda_0^2}{4L\rr{Lip}_f}\frac{\nu}{\mu}\leq \frac{\lambda_0^2}{4L\rr{Lip}_f}$ (see \eqref{eqn:r0}), the sufficient condition \eqref{eqn:suff-condition-cat} holds, thus $\cM$ is geodesically convex.
\end{proof}
    
\begin{proof}[of Theorem \ref{thm:geodesic-regularity}]
    (a) Fix $\alpha f(w)\in\cM$ and $u\in\cT_{\alpha f(w)}\cM\backslash\{0\}$. Let $\beta:[0,1]\rightarrow\cM$ be any smooth curve such that $\beta_0=\alpha f(w)$ and $\dt{\beta_t}\big|_{t=0}=u$. Since $\alpha f$ is a smooth injective immersion, we have a smooth curve $\gamma:[0,1]\rightarrow B$ such that $\beta_t:=\alpha f(\gamma_t)\in\cM$ with $\gamma_0=w$ and $\dt{\alpha f(\gamma_t)}\big|_{t=0}=u$. Then, the quadratic form for the Riemannian Hessian is
    \begin{align}\label{eqn:hessian}
        \langle u, \rr{Hess}~g(\alpha f(w))[u]\rangle_{\alpha f(w)}^\cM &= u^\top \lim_{t\rightarrow 0}\frac{\bm{P}(\alpha f(w))\Big[\grad_{\alpha f(\gamma_t)}^\cM g(\alpha f(\gamma_t))-\grad_{\alpha f(w)}^\cM g(\alpha f(w))\Big]}{t}
    \end{align}
    by (5.19) in \cite{boumal2023introduction}. Recall that $$\grad_{\alpha f(\gamma_t)}^\cM g(\alpha f(\gamma_t))=\bm{P}(\alpha f(\gamma_t))\nabla g(\alpha f(\gamma_t)).$$ Thus, we can make the following decomposition for any $t \in [0,1]$:
    \begin{align}
        \begin{aligned}\bm{P}(\alpha f(w))&\Big[\grad_{\alpha f(\gamma_t)}^\cM g(\alpha f(\gamma_t))-\grad_{\alpha f(w)}^\cM g(\alpha f(w))\Big] \\ &= \bm{P}(\alpha f(w))\Big[\nabla g(\alpha f(\gamma_t))-\nabla g(\alpha f(w))\Big]+\Big[\bm{P}(\alpha f(\gamma_t))-\bm{P}(\alpha f(w))\Big]\nabla g(\alpha f(\gamma_t)).
        \label{eqn:hessian-decomposition}
        \end{aligned}
    \end{align}

    The first term can be lower bounded as
    \begin{align}
        \nonumber u^\top \lim_{t\downarrow 0}\frac{\bm{P}(\alpha f(w))\Big[\nabla g(\alpha f(\gamma_t))-\nabla g(\alpha f(w))\Big]}{t} &= u^\top \lim_{t\downarrow 0}\frac{\nabla g(\alpha f(\gamma_t))-\nabla g(\alpha f(\gamma_0))}{t}\\
        \nonumber &= u^\top \nabla^2 g(\alpha f(\gamma_0))\dt{\alpha f(\gamma_t)}\Big|_{t=0}\\
        & \geq \nu\|u\|_2^2=\nu(\|u\|_{\alpha f(w)}^\cM)^2,
        \label{eqn:hessian-1}
    \end{align}
    since $h\mapsto g(h)$ is (Euclidean) $\nu$-strongly convex, thus $\nabla^2 g(h)\succeq \nu I$ for all $h\in\bR^n$, and $u\in\mathcal{T}_{\alpha f(w)}\cM$, thus $\bm{P}(\alpha f(w))u=u$.  
    
    For the second term, let
    \begin{equation*}
        \rr{Lip}_g^\cC:=\sup_{z\in \cC}\|\nabla g(z)\|,
    \end{equation*}
    for any $\cC\subset \bR^n$.
    Then, since $\gamma_t \in \cM$ for all $t\in[0,1]$, we have
    \begin{align}
        \nonumber u^\top \lim_{t\downarrow 0}\frac{[\bm{P}(\alpha f(\gamma_t))-\bm{P}(\alpha f(w))]\nabla g(\alpha f(\gamma_t))}{t} &\geq -\rr{Lip}_g^\cM\cdot \|u\|_2\cdot \lim_{t\downarrow 0}\frac{\|\bm{P}(\alpha f(\gamma_t))-\bm{P}(\alpha f(w))\|}{t}\\
        &=-\rr{Lip}_g^\cM\cdot\|u\|_2\cdot\Big\|\dt{}\Big[\bm{P}(\alpha f(\gamma_t))\Big]\Big|_{t=0}\Big\|.\label{eqn:shape-curvature}
    \end{align}
     In order to bound the last term above, we use Lemma \ref{lemma:curvature}. Substituting \eqref{eqn:hessian-2b} into \eqref{eqn:shape-curvature}, we obtain
    \begin{equation}\label{eqn:hessian-2}
        u^\top \lim_{t\downarrow 0}\frac{[\bm{P}(\alpha f(\gamma_t))-\bm{P}(\alpha f(w))]\nabla g(\alpha f(\gamma_t))}{t} \geq -\frac{2L\cdot\rr{Lip}_g^\cM}{\alpha\lambda_0^2}\cdot \|u\|_2^2.
    \end{equation}
    Substituting \eqref{eqn:hessian-1} and \eqref{eqn:hessian-2} into \eqref{eqn:hessian} by using the decomposition \eqref{eqn:hessian-decomposition}, we conclude that
    \begin{equation}\label{eqn:hessian-lower-bound}
        \langle u, \rr{Hess}~g(\alpha f(w))[u]\rangle_{\alpha f(w)}^\cM \geq \nu\|u\|_2^2-\frac{2L\cdot\rr{Lip}_g^\cM}{\alpha\lambda_0^2}\cdot \|u\|_2^2.
    \end{equation}
    Finally, for any $w\in B$, we have 
    \begin{align*}
    \|\nabla g(\alpha f(w))\|_2 &= \|\nabla g(\alpha f(w))-\nabla g(f^\star)\|\\
    &= \mu \alpha \|f(w)-f(w_0)\| + \mu\|f^\star\|\\
    &\leq \mu\alpha \rr{Lip}_f\|w-w_0\| + \mu\|f^\star\| \leq \mu\Big( \alpha \rr{Lip}_fr_0 + \|f^\star\|\Big),
    \end{align*}
    which follows from $\mu$-Lipschitz continuity of $\nabla g$, $\nabla g(f^\star)=0$, and $f(w_0)=0$. Hence, we have
    \begin{equation}
        \rr{Lip}_g^\cM \leq \mu\Big( \alpha \rr{Lip}_fr_0 + \|f^\star\|\Big).
    \end{equation}
    By choosing $\alpha$ as stated in the theorem, we ensure that $\frac{2L\mu \|f^\star\|}{\alpha\lambda_0^2}\leq \frac{\nu}{2}$. Since $r_0 \leq \frac{\nu\lambda_0^2}{4\mu L \rr{Lip}_f}$, we also have $\frac{2L\mu\rr{Lip}_f r_0}{\lambda_0^2}\leq \frac{\nu}{2}$. Using these two results, \eqref{eqn:hessian-lower-bound} implies the positive semi-definiteness of the Riemannian Hessian on $\cM$, which implies geodesic convexity of $g|_\cM$ since $\cM$ is a geodesically convex set.

    (b) $g|_\cM:\cM\rightarrow\bR$ is a geodesically convex set, and $\cS:=\{z\in\cM:g(z)\leq g(0)\}$ is a sublevel set for $g$. Then, $\cS$ is geodesically convex by Proposition 11.8 in \cite{boumal2023introduction}.

    (c) Fix $\alpha f(w)\in\cS$ and $u\in\cT_{\alpha f(w)}\cM\backslash\{0\}$. Let $\beta:[0,1]\rightarrow\cS$ be any smooth curve such that $\beta_0=\alpha f(w)$ and $\dt{\beta_t}\big|_{t=0}=u$, and let $\gamma:[0,1]\rightarrow B$ such that $\beta_t:=\alpha f(\gamma_t)\in\cS$ with $\gamma_0=w$ and $\dt{\alpha f(\gamma_t)}\big|_{t=0}=u$. The proof of geodesic strong convexity of $g$ in $\cS$ follows identical steps as (a) until \eqref{eqn:hessian-lower-bound}, where $\rr{Lip}_g^\cS$ replaces $\rr{Lip}_g^\cM$ since $\alpha f(\gamma_t)\in\cS$ for all $t\in[0,1]$. Now, note that
    \begin{align*}
        \|\nabla g(\alpha f(w))\|&\leq \mu\|\alpha f(w)-f^\star\| \leq \sqrt{\frac{2(g(\alpha f(\gamma_t))-g(f^\star))}{\nu}}\\
        &\leq \sqrt{\frac{2g(0)}{\nu}}
    \end{align*}
    since $\alpha f(\gamma_t)\in\cS$, thus $g(\alpha f(\gamma_t))\leq g(0)$. This implies that $\rr{Lip}_g^\cS \leq \sqrt{\frac{2g(0)}{\nu}}$. 


    (d) Similar to the proof of (c), fix $\alpha f(w)\in\cS$ and $u\in\cT_{\alpha f(w)}\cM\backslash\{0\}$. Let $\beta:[0,1]\rightarrow \cS$ be any smooth curve such that $\beta_t=\alpha f(w)$ and $\dt{\beta_t}\big|_{t=0}=u$. We aim to find an upper bound for the quadratic form in \eqref{eqn:hessian} using the decomposition in \eqref{eqn:hessian-decomposition}. Similar to \eqref{eqn:hessian-1}, we have
    \begin{align}
        \nonumber
        u^\top \lim_{t\downarrow 0}\frac{\bm{P}(\alpha f(w))\Big[\nabla g(\alpha f(\gamma_t))-\nabla g(\alpha f(w))\Big]}{t}&= u^\top \nabla ^2g(\alpha f(w))\dt{\alpha f(w_t)}\Big|_{t=0}\\
        \nonumber &= u^\top\nabla^2 g(\alpha f(w))u\\
        \label{eqn:hessian-1u} &\leq \mu\|u\|_2^2,
    \end{align}
    where the second line holds since $\dt{\alpha f(\gamma_t)}\big|_{t=0}=u$ and the inequality is due to $\sup_{h\in\bR^n}\|\nabla^2 g(h)\|\leq \mu$ from the Lipschitz continuity of $\nabla g$. We also have \begin{align}
        \nonumber u^\top \lim_{t\downarrow 0}\frac{[\bm{P}(\alpha f(\gamma_t))-\bm{P}(\alpha f(w))]\nabla g(\alpha f(\gamma_t))}{t} &\leq \rr{Lip}_g^\cS\cdot \|u\|_2\cdot \lim_{t\downarrow 0}\frac{\|\bm{P}(\alpha f(\gamma_t))-\bm{P}(\alpha f(w))\|}{t}\\
        &=\rr{Lip}_g^\cS\cdot\|u\|_2\cdot\Big\|\dt{}\Big[\bm{P}(\alpha f(\gamma_t))\Big]\Big|_{t=0}\Big\|.\label{eqn:shape-curvature-u}
    \end{align}
    Substituting \eqref{eqn:hessian-2b} into the above inequality, we obtain
    \begin{align}
        \nonumber u^\top \lim_{t\downarrow 0}\frac{[\bm{P}(\alpha f(\gamma_t))-\bm{P}(\alpha f(w))]\nabla g(\alpha f(\gamma_t))}{t} \leq \frac{2L\cdot\rr{Lip}_g^\cS}{\alpha \lambda_0^2}\cdot \|u\|_2^2 &\leq \frac{\nu}{2}\|u\|_2^2\\
        &\leq \frac{\mu}{2}\|u\|_2^2,
        \label{eqn:hessian-2u}
    \end{align}
    where the last inequality holds since $\nu I\preccurlyeq \nabla^2g(h)\preccurlyeq\mu I$ for all $h\in\bR^n$. From \eqref{eqn:hessian-1u} and \eqref{eqn:hessian-2u}, the decomposition in \eqref{eqn:hessian-decomposition} implies $$\langle u, \rr{Hess}~g(\alpha f(w))[u]\rangle_{\alpha f(w)}^\cM\leq \frac{3\mu}{2}\|u\|_2^2,$$ which concludes the proof.
    
\end{proof}

\begin{proof}[of Lemma \ref{lemma:riemannian-pl}]
    Note that $\cS$ is a geodesically convex set, thus for any $w_1,w_2\in B$, there exists a smooth curve $c:[0,1]\rightarrow \cM$ and a tangent vector $\tilde{v}\in\cT_{\alpha f(w_1)}\cM$ such that
\begin{equation*}
    c(0)=\alpha f(w_1),~c(1)=\alpha f(w_2),\mbox{ and }c(\xi)=\Exp_{\alpha f(w_1)}(\xi\tilde{v})\in\cS\mbox{ for }\xi\in[0,1].
\end{equation*}
Using this, for any $t < T$, there exists a smooth curve $c:[0,1]\rightarrow \cM$ and a tangent vector $v_t\in\cT_{\alpha f(w^\star)}\cM$ such that
\begin{equation*}
    c(0)=\alpha f(w^\star),~c(1)=\rr{Exp}_{\alpha f(w^\star)}(v_t)=\alpha f(w_t),~\mbox{and }c(\xi)\in\cS\mbox{ for all }\xi \in [0,1].
\end{equation*}
For any $(\alpha f(w),u)$ in the tangent bundle, let $\gamma(\xi):=\rr{Exp}_{\alpha f(w)}(\xi u)$ be corresponding geodesic. Then, let $P_{\xi u}:=\rr{PT}_{\xi\leftarrow 0}^\gamma$ be the parallel transport from $\alpha f(w)$ to $\rr{Exp}_{\alpha f(w)}(\xi u)$ along $\gamma$. $P_u^{-1}$ is an isometry \cite{lee2018introduction}. Since $g|_\cS$ has geodesically $\frac{3\mu}{2}$-Lipschitz continuous gradients by Theorem \ref{thm:geodesic-regularity}(c), we have
\begin{align*}
    \|P_{v_t}^{-1}\grad_{\alpha f(w_t)}^\cM g(\alpha f(w_t))-\grad_{\alpha f(w^\star)}^\cM g(\alpha f(w^\star))\| &\leq \frac{3\mu}{2}\|v_t\|
\end{align*}
by Prop. 10.53 in \cite{boumal2023introduction}. Since $\grad_{\alpha f(w^\star)}^\cM g(\alpha f(w^\star))=0$ and $P_{v_t}^{-1}$ is an isometry, we have $$\|\grad_{\alpha f(w_t)}^\cM g(\alpha f(w_t))\| \leq \frac{3\mu}{2}\|v_t\|.$$ Theorem \ref{thm:geodesic-regularity} implies that $g|_\cS$ is $\frac{\nu}{2}$-geodesically strongly convex, therefore
\begin{align*}
    \frac{\nu}{4}\|v_t\|^2 &\leq g(\alpha f(w_t))-g(\alpha f(w^\star))\\
    &\leq \frac{1}{\nu}\|\grad_{\alpha f(w_t)}^\cM g(\alpha f(w_t))\|_{\alpha f(w_t)}^2.
\end{align*}
where the second inequality follows from the Riemannian counterpart of the Polyak-\L{}ojasiewicz inequality \cite{boumal2023introduction}.
\end{proof}
    
\section{Numerical Experiments}\label{sec:num}
We investigate the numerical performance of the Gauss-Newton gradient flow in the over- and underparameterized settings with two ill-conditioned regression problems. In both problems, we use the loss function $g(\psi)=\frac{1}{2n}\|\psi-y\|_2^2$ where $y=[y_1,y_2,\ldots,y_n]^\top\in\bR^n$. The code to re- produce the experiments can be found in the repository \url{https://github.com/semihcayci/gauss-newton}.

\textbf{Single index model.} We consider a single-index model with a training set $\cD=\{(x_j,y_j)\in\bR^d\times\bR:j=1,2,\ldots,n\}$ where the input is $x_j\sim_\rr{iid}\rr{Unif}(\bS^{d-1})$ and the label is 
\begin{equation}
    y_j = \rr{ReLU}(u^\top x_j)+\epsilon_j,
    \label{eqn:single-index-model}
\end{equation}
where $\rr{ReLU}(z)=\max\{0,z\}$, $u\in\bR^d$ is the target direction and $\epsilon_j\sim\cN(0,1)$ is the noise for $j=1,2,\ldots,n$. As noted in Remark \ref{remark:geometry}, this input distribution leads to small $\lambda_{\min}(\bm{K}_0)$.

\textbf{California Housing dataset.} In the second set of experiments, we consider the California Housing dataset $\mathcal{D}:=\{(x_j,y_j)\in\bR^d\times\bR:j=1,2,\ldots,n\}$ \cite{pace1997sparse}, where each feature vector $x_j\in\bR^d$ with $d = 8$ represents normalized housing-related attributes, and $y_j\in\bR$ represents median house value for $j=1,2,\ldots,n$. We randomly subsample $n$ data points for training. 

\subsection{Overparameterized regime}
 We consider an overparameterized problem with $n\gg p$ in Figure \ref{fig:overpar}. For the single-index model, we use a dataset of $n=800$ samples of ambient dimension $d=16$ and a $\tanh$ neural network with $p=10800$ parameters. For the California Housing dataset, we randomly subsample a training set of size $n=800$, and use a $\tanh$ neural network with $p=6400$ parameters. The parameters are trained by using the Gauss-Newton method with various regularization choices: 
\begin{enumerate}[label=(\roman*)]
    \item adaptive damping $\rho_t = \frac{\frac{1}{4}\lambda_t^2}{1+\frac{1}{4}\lambda_t^2}$,
    \item constant data-based damping with $\rho_t=\frac{\lambda^2}{\lambda^2+1}$ where $\lambda_{\min}(\bm{K}_0) = 4\lambda^2\bm{I}$,
\end{enumerate}
Note that $\rho_t=1.0$ corresponds to the (non-preconditioned) gradient flow. The continuous-time dynamics are simulated by using Euler's method with $\Delta  t = 0.01$.

 \begin{figure}[ht!]
    \centering
    \begin{subfigure}[t]{0.5\textwidth}
        \centering
        \includegraphics[width=.9\linewidth]{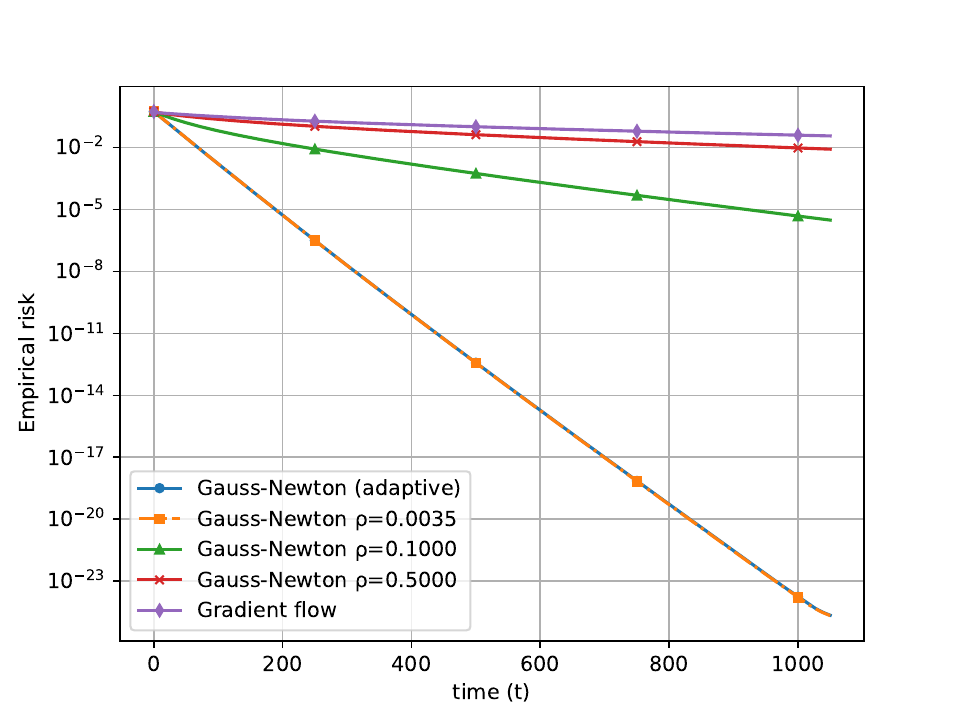}
        \caption{Single-index model}
        \label{fig:overpar-si}
    \end{subfigure}%
    ~ 
    \begin{subfigure}[t]{0.5\textwidth}
        \centering
        \includegraphics[width=.9\linewidth]{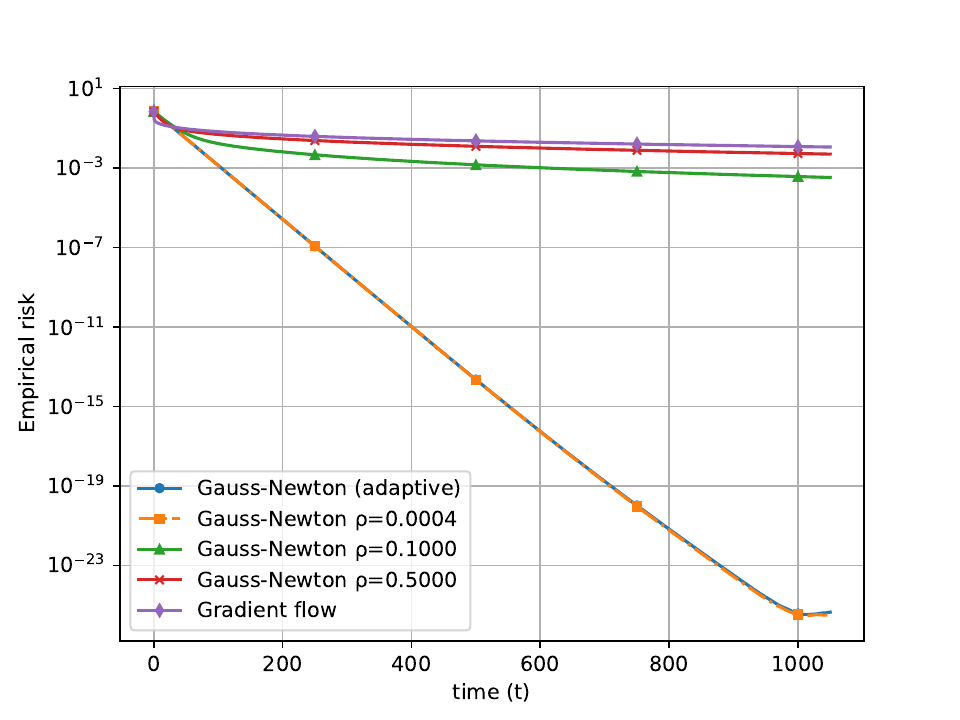}
        \caption{California Housing}
        \label{fig:overpar-ch}
    \end{subfigure}
    \caption{Empirical risk in the overparameterized regime under the Gauss-Newton dynamics with various regularization schemes $\rho=(\rho_t)_{t\geq 0}$. Gradient flow $(\rho_t=1)$ suffers from slow convergence due to the ill-conditioned neural tangent kernel, while the Gauss-Newton with appropriate constant or adaptive damping schedules achieve fast exponential convergence rates.}
    \label{fig:overpar}
\end{figure}
 
In these examples, the neural tangent kernel $\bm{K}_0$ is ill-conditioned (see also Remark \ref{remark:geometry}), thus the gradient flow suffers from slow convergence, while the Gauss-Newton method with appropriate constant and adaptive damping choices achieve fast convergence. In particular, the adaptive choice $\rho_t=\lambda_t^2/(1+\lambda_t^2)$ and the constant data-dependent choice $\rho_t=\lambda^2/(1+\lambda^2)$ achieves fast linear convergence rate, verifying the theoretical results in Theorem \ref{thm:lm-convergence}. Note that in the lazy training regime with large $\alpha\sqrt{m}$ that we consider, we have $\lambda_t^2/(1+\lambda_t^2)\gtrapprox \lambda^2/(1+\lambda^2)$, thus adaptive and data-dependent constant damping choices yield very similar empirical risk performance as characterized in Theorem \ref{thm:lm-convergence}.



\subsection{Underparameterized Regime}

We investigate the performance of Gauss-Newton dynamics (unregularized) and gradient flow in two underparameterized regression problems: single-index model \eqref{eqn:single-index-model} and California Housing dataset with the loss function $g(\psi)=\frac{1}{2n}\|\psi-y\|_2^2$ and $n=2048$ randomly-chosen samples. Theorem \ref{thm:gf-convergence} indicates convergence to an \emph{in-class} optimal predictor in $\alpha f(B)$. Furthermore, the output scaling factor $\alpha$ has a self-regularization effect: large $\alpha > 0$ implies a smaller set $B$. We demonstrate the impact of different $\alpha > 0$ and the impact of Gauss-Newton preconditioning in Figure \ref{fig:underpar}. 
\begin{figure}[ht!]
    \centering
    \begin{subfigure}[t]{0.5\textwidth}
        \centering
        \includegraphics[width=.9\linewidth]{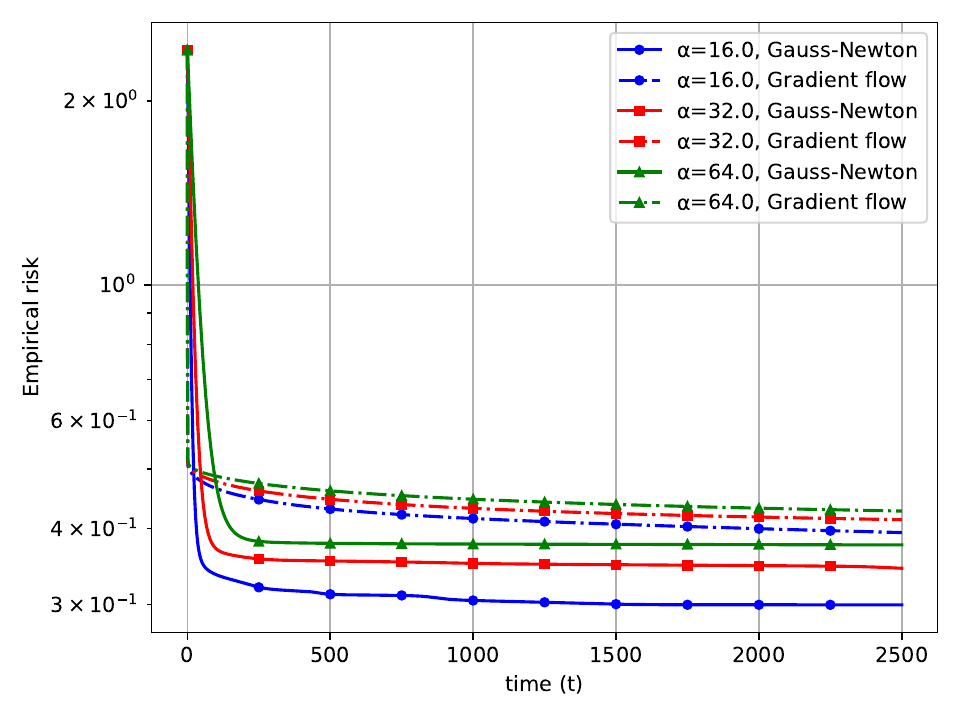}
        \caption{Single-index model}
        \label{fig:underpar-si}
    \end{subfigure}%
    ~ 
    \begin{subfigure}[t]{0.5\textwidth}
        \centering
        \includegraphics[width=.9\linewidth]{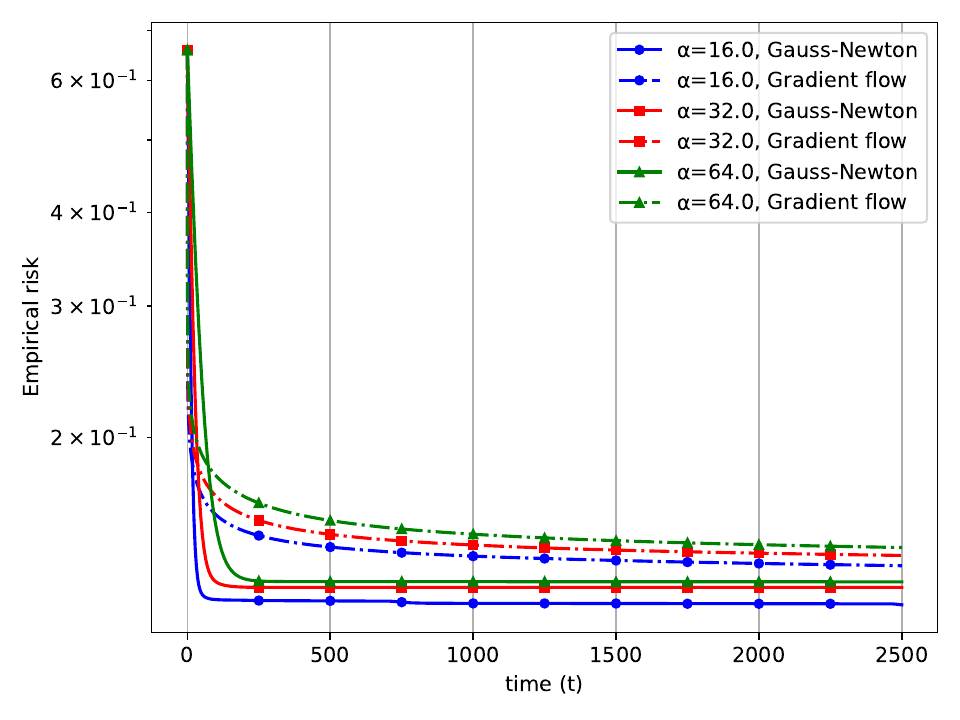}
        \caption{California Housing}
        \label{fig:underpar-ch}
    \end{subfigure}
    \caption{Empirical loss in the underparameterized regime under the Gauss-Newton and gradient flow dynamics for various $\alpha$.}
    \label{fig:underpar}
\end{figure}
A large scaling factor $\alpha$ yields smaller parameter set $B$, thus the in-class optimum predictor has a larger inductive bias as demonstrated in Figure \ref{fig:underpar}, which verifies the regularization impact of $\alpha > 0$ in the underparameterized regime.

\vskip 0.2in
\bibliography{sample}

\end{document}